\newtheorem{thm}{Theorem}
\newtheorem{lem}{Lemma}
\newtheorem{prop}{Proposition}
\newtheorem{assum}{Assumption}
\newtheorem{prob}{Problem}
\newcommand{\nat}{\mathbb{N}}
\newcommand{\real}{\mathbb{R}}
\newcommand{\ip}[1]{\left\langle{}#1\right\rangle}
\newcommand{\norm}[1]{\left\lVert{}#1\right\rVert}
\DeclareMathOperator*{\argmin}{argmin}
\newif\ifarxiv\arxivtrue
\begin{document}
\title[Subgradient Algorithms with the Line Search]{Incremental and Parallel Machine Learning Algorithms with Automated Learning Rate Adjustments}
\author[K.~Hishinuma]{Kazuhiro Hishinuma}
\author[H.~Iiduka]{Hideaki Iiduka}
\begin{abstract}
The existing machine learning algorithms for minimizing the convex function over a closed convex set suffer from slow convergence because their learning rates must be determined before running them.
This paper proposes two machine learning algorithms incorporating the line search method, which automatically and algorithmically finds appropriate learning rates at run-time.
One algorithm is based on the incremental subgradient algorithm, which sequentially and cyclically uses each of the parts of the objective function; the other is based on the parallel subgradient algorithm, which uses parts independently in parallel.
These algorithms can be applied to constrained nonsmooth convex optimization problems appearing in tasks of learning support vector machines without adjusting the learning rates precisely.
The proposed line search method can determine learning rates to satisfy weaker conditions than the ones used in the existing machine learning algorithms.
This implies that the two algorithms are generalizations of the existing incremental and parallel subgradient algorithms for solving constrained nonsmooth convex optimization problems.
We show that they generate sequences that converge to a solution of the constrained nonsmooth convex optimization problem under certain conditions.
The main contribution of this paper is the provision of three kinds of experiment showing that the two algorithms can solve concrete experimental problems faster than the existing algorithms.
First, we show that the proposed algorithms have performance advantages over the existing ones in solving a test problem.
Second, we compare the proposed algorithms with a different algorithm Pegasos, which is designed to learn with a support vector machine efficiently, in terms of prediction accuracy, value of the objective function, and computational time.
Finally, we use one of our algorithms to train a multilayer neural network and discuss its applicability to deep learning.

\textbf{Keywords:}
Support Vector Machines, Neural Networks, Nonsmooth Convex Optimization, Incremental Subgradient Algorithm, Parallel Subgradient Algorithm, Line Search Algorithm, Parallel Computing
\end{abstract}
\maketitle

\section{Introduction}
In this paper, we consider a technique to adjust the learning rates that appear in subgradient algorithms for letting a generated sequence converge to an optimal solution.
The subgradient algorithm \cite[Section~8.2]{bertsekas} and its variants \cite{psm,ism,pegasos} have been proposed as ways of solving the problem of minimizing a nonsmooth, convex function over a closed convex set by iterative processes like the steepest descent method for dealing with a smooth, convex function.
These methods iterate the current approximate solution by shifting it along a descent direction at that point by a given degree called a \emph{learning rate}.
Although descent directions are decided on the basis of the subgradient at each point, the learning rates are generally decided for theoretical reasons for ensuring the convergence of the generated sequence.
This implies that subgradient algorithms can be run more efficiently if we can choose more suitable learning rates concerning the objective function at each iteration.
Therefore, we should consider how to choose better learning rates while at the same time maintaining the convergence properties.

We can reduce a lot of practical problems to ones solvable with subgradient algorithms, that is, problems of minimizing a nonsmooth, convex function over a closed convex set.
One of the important applications is learning with a support vector machine.
Support vector machines are effective and popular classification learning tools \cite{ml1,ml2,ml3,pegasos}.
The task of learning with a support vector machine is cast as an empirical loss minimization with a penalty term for the norm of the classifier that is being learned \cite[Problem~(1)]{pegasos}.
If this loss objective function is convex, we can handle this learning task by minimizing a nonsmooth, convex function over a closed convex set.
There are practical optimization algorithms for solving this minimization problem, such as Pegasos \cite{pegasos}, the incremental subgradient algorithm \cite{ism}, and the parallel subgradient algorithm \cite{psm}.
These algorithms are variants of the subgradient algorithm.
They iteratively choose training examples and improve their approximation by using a part of the objective function which corresponds to the chosen examples.
Not limited to machine learning, there exist many applications of minimizing a nonsmooth, convex function over a closed convex set, such as signal recovery \cite{combettes2003}, bandwidth allocation \cite{iiduka2013}, and beamforming \cite{yamada2007}.
Hence, making the performance of these algorithms better would increase the efficiency of these applications.
Here, we attempt to do so by modifying the selection of the learning rate.

Pegasos is a stochastic subgradient algorithm with a carefully chosen learning rate that is designed for efficiently learning with a support vector machine \cite{pegasos}.
This learning rate is determined from the regularization constant of the penalty term.
Hence, this algorithm can improve approximate solutions without having to adjust their learning rates for each individual learning task.
However, it is specialized to learning with a support vector machine, and it cannot be applied to other applications such as deep learning.

The sequential minimal optimization (SMO) algorithm \cite{platt1998} is also used for learning with a support vector machine.
This algorithm can be applied to a quadratic programming optimization problem appearing in learning with a dual form of a support vector machine and can solve it with a small amount of memory and quickly \cite{svm-book,platt1998}.
However, this algorithm deals with the dual form of the optimization problem; as such, the number of objective variables is likely to be large when many instances are given to the learning task.
Furthermore, the class of problem that this algorithm can deal with is limited to quadratic programming.
This implies that it cannot be applied to general nonsmooth, convex programming.

In the field of mathematical optimization, the incremental and parallel subgradient algorithms \cite{psm,ism} are useful for solving problems involving the minimization of a nonsmooth, convex function over a closed convex set.
The incremental subgradient algorithm \cite{ism} minimizes the objective function by using alternately one of the functions composing the summed objective, while the parallel subgradient algorithm \cite{psm} minimizes it by using all of the composing functions independently.
Since the parallel subgradient algorithm treats each of the composing functions independently, computations with respect to each function can be parallelized.
It is expected that parallelization shortens the computational time of learning.
This implies that the parallel subgradient algorithm can learn support vector machines for larger datasets and/or in a shorter time compared with other algorithms.

A weak point of the incremental and parallel subgradient algorithms \cite{psm,ism} is that they need to have suitably adjusted learning rates in order to run efficiently.
However, the suitable learning rate depends on various factors, such as the number of the composing objective functions, number of dimensions, the shape of each objective function and constraint set, and the selection of subgradients.
This implies that it is too difficult to choose a suitable learning rate before run-time.
In contrast, Pegasos \cite{pegasos} uses a concrete learning rate optimized for the task of learning with a support vector machine and does not require this learning rate to be adjusted.
Therefore, it can be used more easily than the incremental and parallel subgradient algorithms \cite{psm,ism}.

In unconstrained minimization algorithms, line searches are used to select a suitable learning rate \cite{linesearch1,linesearch2}.
In particular, the \emph{Wolfe conditions} \cite{wolfe} are learning rate criteria for the line search.
The Wolfe conditions are such that the learning rate must satisfy a sufficient decrease condition and a curvature condition \cite[Chapter 3]{nocedal}.
The sufficient decrease condition is that the learning rate is acceptable only if its function value is below a linear function with a negative slope.
This condition ensures that the algorithms update an approximation to a better one.
However, it is not enough to ensure that the algorithm makes reasonable progress because it will do so for all sufficiently small learning rates.
Therefore, a curvature condition is invoked that generates a sequence further enough along the chosen direction.

Motivated by the idea of the line search, this paper proposes novel incremental and parallel subgradient algorithms that can run efficiently without precise learning rate adjustments.
Reference~\cite{pgmls} describes a gradient-projection algorithm with a line search that minimizes the objective function.
However, this algorithm assumes that the objective function is differentiable.
In addition, it is designed for single-core computing; it is not useful in multi-core computing.
Reference~\cite{beltran2005} proposes the radar subgradient algorithm, which is a variant of the subgradient algorithm including a procedure for finding an effective learning rate by using a line search at each iteration.
The line search method used in Reference~\cite{beltran2005} is inspired by the cutting-plane method and works out a learning rate with the first-order information.
However, this algorithm deals with the whole objective function and cannot use a part of the objective function at each iteration.
This implies that it cannot be used in applications that give information to the algorithm through a data stream.
In addition, the line search method used in Reference~\cite{beltran2005} may fail and is distinct from the line search proposed in this paper.
Hence, combining this line search method with the one we propose may have a complementary effect when the properties of the optimization problem are disadvantageous to one of the algorithms.
Reference~\cite{iidukacg} gives an algorithm for solving fixed point problems, covering the constrained minimization problem discussed in this paper, with a line search.
This algorithm has a fast convergence property, though it decides only the coefficient of the convex combination and is not designed for multi-core computing.
The algorithm in \cite{hayashiy,psm,psm2,star,ism} requires a suitable learning rate in order to converge efficiently.
However, as we mentioned before, the learning rate is very difficult to adjust.

In contrast to previous reports, this paper proposes incremental and parallel subgradient algorithms with a line search to find better learning rates than the ones used in the existing algorithms.
To realize this proposal, we extend the concept of the \emph{learning rate} to a \emph{step-range}, which is a set of candidates for the learning rate.
The line search procedure is given a step-range and chooses the most suitable learning rate among it at run-time.
Using a line search with a step-range has three merits.
First, the suitable learning rates chosen by the line search accelerate the algorithms and make their solutions better.
Section~\ref{sec:experi} shows that the proposed algorithms gave better solutions than the one given by Pegasos \cite{pegasos} when they all ran the same number of iterations.
The second merit is that we do not need to adjust the learning rate precisely.
The existing incremental and parallel subgradient algorithms \cite{psm,ism} cannot converge efficiently without appropriate adjustments to their learning rates.
This is their weak point in comparison with Pegasos \cite{pegasos}.
In contrast, the proposed algorithms only need step-ranges, i.e. rough candidates, to converge efficiently, because the line search automatically chooses the learning rates from among the step-range.
Hence, they can be easily used to learn support vector machines.
Finally, the proposed algorithms can be applied to difficult problems whose suitable learning rates cannot be chosen beforehand.
Section~\ref{sec:analys} provides a condition on the step-range compositions to ensure they converge to an optimizer of the problem.
Hence, even if a suitable learning rate cannot be specified beforehand, the line search can algorithmically find one at run-time and make the algorithms converge efficiently to an optimizer.
We show that our algorithms converge to an optimizer to the problem when the step-range is diminishing.
In addition, if the step-range is a singleton set, they coincide with the existing incremental and parallel subgradient algorithms \cite{psm,ism}.
Hence, the step-range is a generalization of the learning rates used in the existing algorithms.

We compared the proposed algorithms with Pegasos \cite{pegasos} and the SMO algorithm on various datasets \cite{uci,mnist,libsvm} for binary and multiclass classification.
The results of the comparison demonstrated that the proposed algorithms perform better than the existing ones in terms of the value of the objective function for learning with a support vector machine and in terms of computational time.
In particular, the parallel subgradient algorithm dramatically reduced the computational times of the learning tasks.

Stochastic subgradient algorithms are useful for learning with a multilayer neural network habitually \cite{bottou1991}.
The incremental subgradient algorithm is a specialization of the stochastic subgradient algorithm.
Therefore, we can use one of our algorithms, a variant of the incremental subgradient algorithm, to train a multilayer neural network.
We compared it with two other variants of the incremental subgradient algorithm.
The results show that our algorithm can minimize the objective function of the trained neural network more than the others.
This ability implies that it is also useful for training not only SVMs but also neural networks, including ones for deep learning.

This paper is organized as follows.
Section~\ref{sec:prelim} gives the mathematical preliminaries and mathematical formulation of the main problem.
Section~\ref{sec:method} presents our algorithms.
We also show the fundamental properties of these algorithms that are used to prove the main theorems.
Section~\ref{sec:analys} presents convergence analyses.
Section~\ref{sec:experi} describes numerical comparisons of the proposed algorithms with the existing ones in Reference~\cite{psm,ism,pegasos} using concrete machine learning datasets \cite{uci,mnist,libsvm}.
In this section, we also describe how to use one of the proposed algorithms to train a multilayer neural network for recognizing handwritten digits.
Section~\ref{sec:conclu} concludes this paper.

\section{Mathematical Preliminaries}\label{sec:prelim}
Let $\real^N$ be an $N$-dimensional Euclidean space with the standard Euclidean inner product $\langle\cdot,\cdot\rangle:\real^N\times\real^N\to\real$ and its induced norm defined by $\|x\|:=\langle{}x,x\rangle^\frac{1}{2}$.
We define the notation $\nat:=\{1,2,\ldots\}$ as the set of all natural numbers.
Let $x_n\to{}x$ denote that the sequence $\{x_n\}\subset\real^N$ converges to a point $x\in\real^N$.

A \emph{subgradient} $g$ of a convex function $f:\real^N\to\real$ at a point $x\in\real^N$ is defined by $g\in\real^N$ such that $f(x)+\langle{}y-x,g\rangle{}\le{}f(y)$ for all $y\in\real^N$.
The set of all subgradients at a point $x\in\real^N$ is denoted as $\partial{}f(x)$ \cite{rockafellar}, \cite[Section 7.3]{noco}.

The metric projection onto a nonempty, closed convex set $C\subset\real^N$ is denoted by $P_C:\real^N\to{}C$ and defined by $\|x-P_C(x)\|=\inf_{y\in{}C}\|x-y\|$ \cite[Section~4.2, Chapter~28]{bc}.
$P_C$ satisfies the nonexpansivity condition \cite[Subchapter~5.2]{noco}; i.e. $\|P_C(x)-P_C(y)\|\le\|x-y\|$ for all $x,y\in\real^N$.

\subsection{Main Problem}
We will consider the following optimization problem \cite{psm,ism}: let $f_i:\real^N\to[0,\infty)\quad(i=1,2,\ldots,K)$ be convex, continuous functions and let $C$ be a nonempty, closed convex subset of $\real^N$.
Then,
\begin{align}\begin{split}
\text{minimize }&f(x):=\sum_{i=1}^Kf_i(x),\\
\text{Subject to }&x\in{}C.\label{eq:main}
\end{split}\end{align}

Let us discuss Problem~\eqref{eq:main} in the situation that a closed convex subset $C$ of an $N$-dimensional Euclidean space $\real^N$ is simple in the sense that $P_C$ can be computed within a finite number of arithmetic operations.
Examples of a simple, closed convex set $C$ are a closed ball, a half-space, and the intersection of two half-spaces \cite[Examples 3.16 and 3.21, and Proposition 28.19]{bc}.

The task of learning with a support vector machine can be cast as Problem (1) \cite[Problem~(1)]{pegasos}.
Furthermore, there are a lot of applications not limited to learning with a support vector machine when $f$ is nonsmooth but convex on $\real^N$ and when $C\subset\real^N$ is simple.
For example, minimizing the total variation of a signal over a convex set and Tykhonov-like problems with $L^1$-norms \cite[I.~Introduction]{dratch} are able to be handled as Problem~\eqref{eq:main}.
Application of Problem~\eqref{eq:main} to learning with a support vector machine will be described in Section~\ref{sec:experi}.

The following assumptions are made throughout this paper.
\begin{assum}[Subgradient Boundedness{\cite[Assumption~2.1]{ism}}]\label{assum:sgbdd}
For all $i=1,2,\ldots,K$, there exists $M_i\in(0,\infty)$ such that
\begin{align*}\|g\|\le{}M_i\quad(x\in{}C; g\in\partial{}f_i(x)).
\end{align*}
We define a constant $M:=\sum_{i=1}^KM_i$.
\end{assum}
\begin{assum}[Existence of Optimal Solution{\cite[Proposition~2.4]{ism}}]\label{assum:exsol}
$\argmin_{x\in{}C}f(x)\neq\emptyset$.
\end{assum}

\section{Proposed Algorithms and their Fundamental Properties}\label{sec:method}
\subsection{Incremental Subgradient Algorithm}
This subsection presents the incremental subgradient algorithm, Algorithm~\ref{alg:ngism}, for solving Problem~\eqref{eq:main}.
\begin{algorithm}
  \caption{Incremental Subgradient Algorithm}
  \label{alg:ngism}
  \begin{algorithmic}[1]
    \Require{$\forall{}n\in\nat,[\underline{\lambda}_n,\overline{\lambda}_n]\subset(0,\infty)$.}
    \State{$n\gets{}1$, $x_1\in{}C$.}
    \Loop
      \State{$y_{n,0}:=x_n$.}
      \For{$i=1,2,\ldots,K$}\Comment{In sequence}
        \State{$g_{n,i}\in\partial{}f_i(y_{n,i-1})$.}\label{all:ngism:start}
        \State{$\lambda_{n,i}\in[\underline{\lambda}_n,\overline{\lambda}_n]$.}\label{all:ngism:step}\Comment{By a line search algorithm}
        \State{$y_{n,i}:=P_C(y_{n,i-1}-\lambda_{n,i}g_{n,i})$.}\label{all:ngism:end}
      \EndFor
      \State{$x_{n+1}:=y_{n,K}$.}
      \State{$n\gets{}n+1$.}
    \EndLoop
  \end{algorithmic}
\end{algorithm}
Let us compare Algorithm~\ref{alg:ngism} with the existing one\cite{ism}.
The difference is Step~\ref{all:ngism:step} of Algorithm~\ref{alg:ngism}.
The learning rate $\lambda_n$ of the existing algorithm must be decided before the algorithm runs.
However, Algorithm~\ref{alg:ngism} only needs the step-range $[\underline{\lambda}_n,\overline{\lambda}_n]$.
A learning rate within the range used by Algorithm~\ref{alg:ngism} can be automatically determined at run-time.
Algorithm~\ref{alg:ngism} coincides with the incremental subgradient algorithm when the given step-range $[\underline{\lambda}_n,\overline{\lambda}_n]$ is a singleton set, i.e. $\underline{\lambda}_n:=\overline{\lambda}_n:=\lambda_n$, which means that it is a generalization of the algorithm in \cite{ism}.
In this case, Algorithm~\ref{alg:ngism} chooses only one learning rate $\lambda_n$ from the singleton step-range $[\underline{\lambda}_n,\overline{\lambda}_n]=\{\lambda_n\}$.

This difference has three merits.
First, the suitably chosen learning rates in Step~\ref{all:ngism:step} accelerate convergence and make the solutions more accurate.
Second, Algorithm~\ref{alg:ngism} does not require the learning rate to be precisely adjusted in order for it to converge efficiently, unlike the existing incremental subgradient algorithm \cite{ism}.
Instead, Algorithm~\ref{alg:ngism} only needs a rough step-range as the line search automatically chooses learning rates from among this range.
Hence, it can easily be used to learn support vector machines.
Finally, Algorithm~\ref{alg:ngism} can be applied to problems in which a suitable learning rate cannot be chosen beforehand.
Hence, even if the suitable learning rate cannot be specified, line search can algorithmically find this learning rate and make proposed algorithms converge efficiently to an optimizer.

Algorithm~\ref{alg:ngism} has the following property, which is used for proving the main theorem in Section~\ref{sec:analys}.
We omit the proof of this lemma here and refer the reader to \cite[Lemma~\ref*{alg:ngism}]{arXiv}.
\begin{lem}[Fundamental Properties of Algorithm~\ref{alg:ngism} {\cite[Lemma~\ref*{alg:ngism}]{arXiv}}]\label{lem:ngism}
Let $\{x_n\}$ be a sequence generated by Algorithm~\ref{alg:ngism}.
Then, for all $y\in{}C$ and for all $n\in\nat$, the following inequality holds:
\begin{align*}
\|x_{n+1}-y\|^2
\le\|x_n-y\|^2-2\sum_{i=1}^K\lambda_{n,i}(f_i(x_n)-f_i(y))+\overline{\lambda}_n^2M^2.
\end{align*}
\end{lem}
\ifarxiv
\begin{proof}
Fix $y\in{}C$ and $n\in\nat$ arbitrarily.
From the nonexpansivity of $P_C$, the definition of subgradients, and Assumption \ref{assum:sgbdd}, we have
\begin{align*}
\|x_{n+1}-y\|^2
&=\|P_C(y_{n,K-1}-\lambda_{n,K}g_{n,K})-P_C(y)\|^2\\
&\le\|y_{n,K-1}-y-\lambda_{n,K}g_{n,K}\|^2\\
&=\|y_{n,K-1}-y\|^2-2\lambda_{n,K}\langle{}y_{n,K-1}-y,g_{n,K}\rangle+\lambda_{n,K}^2\|g_{n,K}\|^2\\
&\le\|x_n-y\|^2-2\sum_{i=1}^K\lambda_{n,i}\langle{}y_{n,i-1}-y,g_{n,i}\rangle+\sum_{i=1}^K\lambda_{n,i}^2\|g_{n,i}\|^2\\
&\le\|x_n-y\|^2-2\sum_{i=1}^K\lambda_{n,i}(f_i(y_{n,i-1})-f_i(y))+\overline{\lambda}_n^2\sum_{i=1}^KM_i^2,
\end{align*}
where the second equation comes from $\|x-y\|^2=\|x\|^2-2\langle{}x,y\rangle+\|y\|^2\quad(x,y\in\real^N)$.
Using the definition of subgradients and the Cauchy-Schwarz inequality, we have
\begin{align*}
\|x_{n+1}-y\|^2
&\le\|x_n-y\|^2-2\sum_{i=1}^K\lambda_{n,i}(f_i(x_n)-f_i(y))-2\sum_{i=1}^K\lambda_{n,i}(f_i(y_{n,i-1})-f_i(x_n))+\overline{\lambda}_n^2\sum_{i=1}^KM_i^2.\\
&\le\|x_n-y\|^2-2\sum_{i=1}^K\lambda_{n,i}(f_i(x_n)-f_i(y))+2\overline{\lambda}_n\sum_{i=1}^KM_i\|y_{n,i-1}-x_n\|+\overline{\lambda}_n^2\sum_{i=1}^KM_i^2.
\end{align*}
Further, the nonexpansivity of $P_C$ and the triangle inequality mean that, for all $i=2,3,\ldots,K$,
\begin{align*}
\|y_{n,i-1}-x_n\|
&=\|P_C(y_{n,i-2}-\lambda_{n,i-1}g_{n,i-1})-P_C(x_n)\|\\
&\le\|y_{n,i-2}-x_n-\lambda_{n,i-1}g_{n,i-1}\|\\
&\le\|y_{n,i-2}-x_n\|+\lambda_{n,i-1}\|g_{n,i-1}\|\\
&\le\|y_{n,i-2}-x_n\|+\overline{\lambda}_nM_{i-1}\\
&\le\overline{\lambda}_n\sum_{j=1}^{i-1}M_j.
\end{align*}
From above inequality and the fact that $\|y_{n,0}-x_n\|=\|x_n-x_n\|=0$, we find that
\begin{align*}
\|x_{n+1}-y\|^2
&\le\|x_n-y\|^2-2\sum_{i=1}^K\lambda_{n,i}(f_i(x_n)-f_i(y))+2\overline{\lambda}_n^2\sum_{i=1}^KM_i\sum_{j=1}^{i-1}M_j+\overline{\lambda}_n^2\sum_{i=1}^KM_i^2\\
&=\|x_n-y\|^2-2\sum_{i=1}^K\lambda_{n,i}(f_i(x_n)-f_i(y))+\overline{\lambda}_n^2\left(\sum_{i=1}^KM_i\right)^2\\
&=\|x_n-y\|^2-2\sum_{i=1}^K\lambda_{n,i}(f_i(x_n)-f_i(y))+\overline{\lambda}_n^2M^2.
\end{align*}
This completes the proof.\qed
\end{proof}
\fi

\subsection{Parallel Subgradient Algorithm}
Algorithm~\ref{alg:ngpsm} below is an extension of the parallel subgradient algorithm\cite{psm}.
\begin{algorithm}
  \caption{Parallel Subgradient Algorithm}
  \label{alg:ngpsm}
  \begin{algorithmic}[1]
    \Require{$\forall{}n\in\nat,[\underline{\lambda}_n,\overline{\lambda}_n]\subset(0,\infty)$.}
    \State{$n\gets{}1$, $x_1\in{}C$.}
    \Loop
      \ForAll{$i\in\{1,2,\ldots,K\}$}\Comment{Independently}\label{all:ngpsm:for}
        \State{$g_{n,i}\in\partial{}f_i(x_n)$.}
        \State{$\lambda_{n,i}\in[\underline{\lambda}_n,\overline{\lambda}_n]$.}\label{all:ngpsm:step}\Comment{By a line search algorithm}
        \State{$y_{n,i}:=P_C(x_n-\lambda_{n,i}g_{n,i})$.}
      \EndFor
      \State{$x_{n+1}:=\frac{1}{K}\sum_{i=1}^Ky_{n,i}$.}
      \State{$n\gets{}n+1$.}
    \EndLoop
  \end{algorithmic}
\end{algorithm}
The difference between Algorithm~\ref{alg:ngpsm} and the algorithm in \cite{psm} is Step~\ref{all:ngpsm:step} of Algorithm~\ref{alg:ngpsm}.
The existing algorithm uses a given learning rate $\lambda_n$, while Algorithm~\ref{alg:ngpsm} chooses a learning rate $\lambda_n$ from the step-range $[\underline{\lambda}_n,\overline{\lambda}_n]$ at run-time.

The common feature of Algorithm~\ref{alg:ngpsm} and the parallel subgradient algorithm \cite{psm} is loop independence (Step~\ref{all:ngpsm:for}).
This loop structure is not influenced by the computation order.
Hence, each iteration of this loop can be computed in parallel.
Therefore, parallelization using multi-core processing should be able to reduce the time needed for computing this loop procedure.
Generally speaking, the main loop of Algorithm~\ref{alg:ngpsm} is computationally heavier than the other subgradient algorithms including Pegasos, because it appends the learning rate selection (line search) procedure to the existing one.
However, parallelization alleviates this effect of the line search procedure (This is shown in Section~\ref{sec:experi}).

Next, we have the following lemma.
\begin{lem}[Fundamental Properties of Algorithm~\ref{alg:ngpsm} {\cite[Lemma~\ref*{lem:ngpsm}]{arXiv}}]\label{lem:ngpsm}
Let $\{x_n\}$ be a sequence generated by Algorithm~\ref{alg:ngpsm}.
Then, for all $y\in{}C$ and for all $n\in\nat$, the following inequality holds:
\begin{align*}
\|x_{n+1}-y\|^2
\le\|x_n-y\|^2-\frac{2}{K}\sum_{i=1}^K\lambda_{n,i}(f_i(x_n)-f_i(y))+\overline{\lambda}_n^2M^2.
\end{align*}
\end{lem}
\ifarxiv
\begin{proof}
Fix $y\in{}C$ and $n\in\nat$ arbitrarily.
From the convexity of $\|\cdot\|^2$, the nonexpansivity of $P_C$, the definition of subgradients, and Assumption \ref{assum:sgbdd}, we have
\begin{align*}
\|x_{n+1}-y\|^2
&=\left\|\frac{1}{K}\sum_{i=1}^KP_C(x_n-\lambda_{n,i}g_{n,i})-P_C(y)\right\|^2\\
&\le\frac{1}{K}\sum_{i=1}^K\left\|x_n-y-\lambda_{n,i}g_{n,i}\right\|^2\\
&=\frac{1}{K}\sum_{i=1}^K(\|x_n-y\|^2-2\lambda_{n,i}\langle{}x_n-y,g_{n,i}\rangle+\lambda_{n,i}^2\|g_{n,i}\|^2)\\
&\le\|x_n-y\|^2-\frac{2}{K}\sum_{i=1}^K\lambda_{n,i}(f_i(x_n)-f_i(y))+\overline{\lambda}_n^2M^2.
\end{align*}
This completes the proof.\qed
\end{proof}
\fi

\subsection{Line Search Algorithms}\label{ssec:lsa}
Step \ref{all:ngism:step} of Algorithm~\ref{alg:ngism} and Step~\ref{all:ngpsm:step} of Algorithm~\ref{alg:ngpsm} are implemented as line searches.
The algorithms decide an efficient learning rate $\lambda_n$ in $[\underline{\lambda}_n,\overline{\lambda}_n]$ by using $y_{n,i-1}$ in Algorithm~\ref{alg:ngism} (or $x_n$ in Algorithm~\ref{alg:ngpsm}), $g_{n,i}$, $f_i$ and other accessible information on $i$.
This is the principal idea of this paper.
We can use any algorithm that satisfies the above condition.
The following are such examples.

The simplest line search is the \emph{discrete argmin}, as shown in Algorithm~\ref{alg:dargmin}.
\begin{algorithm}
  \caption{Discrete Argmin Line Search Algorithm}
  \label{alg:dargmin}
  \begin{algorithmic}[1]
    \State{$x_p:=\begin{cases}y_{n,i-1}&\text{(Algorithm~\ref{alg:ngism})},\\x_n&\text{(Algorithm~\ref{alg:ngpsm})}\end{cases}$.}
    \State{$\lambda_{n,i}\gets{}L_1\overline{\lambda}_n+(1-L_1)\underline{\lambda}_n$.}
    \For{$L_t\in\{L_2,L_3,\ldots,L_k\}$}
      \State{$t\gets{}L_t\overline{\lambda}_n+(1-L_t)\underline{\lambda}_n$.}
      \If{$f_i(P_C(x_p-tg_{n,i}))<f_i(P_C(x_p-\lambda_{n,i}g_{n,i}))$}
        \State{$\lambda_{n,i}\gets{}t$}
      \EndIf
    \EndFor
  \end{algorithmic}
\end{algorithm}
First, we set the ratio candidates $\{L_1,L_2,\ldots,L_k\}\subset[0,1]$.
In each iteration, we compute all of the candidate objectives for the learning rate $\lambda_{n,i}=L_t\overline{\lambda}_n+(1-L_t)\underline{\lambda}_n\quad(t=1,2,\ldots,k)$ and take the best one.

Algorithm~\ref{alg:lnsel2} is a line search based on the Wolfe conditions.
\begin{algorithm}
  \caption{Logarithmic-Interval Armijo Line Search}
  \label{alg:lnsel2}
  \begin{algorithmic}[1]
    \State{$x_p:=\begin{cases}y_{n,i-1}&\text{(Algorithm~\ref{alg:ngism})},\\x_n&\text{(Algorithm~\ref{alg:ngpsm})}\end{cases}$.}
    \For{$I_R=1,1/a,1/a^2,\ldots,1/a^k$}
      \State{$\lambda_{n,i}\gets{}I_R\overline{\lambda}_n+(1-I_R)\underline{\lambda}_n$.}
      \If{$f_i(P_C(x_p-\lambda_{n,i}g_{n,i}))\le{}f_i(x_p)-c_1\langle{}x_p-P_C(x_p-\lambda_{n,i}g_{n,i}),g_{n,i}\rangle{}$}
        \State{\textbf{stop} (success).}
      \EndIf
    \EndFor
    \State{\textbf{stop} (failed).}\label{alg:lnsel2:fail}
  \end{algorithmic}
\end{algorithm}
It finds a learning rate that satisfies the sufficient decrease condition with logarithmic grids.
Once this learning rate has been found, the algorithm stops and the learning rate it found is used in the caller algorithm.
However, this algorithm may fail (Step~\ref{alg:lnsel2:fail}).
To avoid such a failure, we can make the caller algorithm use $\underline{\lambda}_n$.
This is the largest learning rate of the candidates for making an effective update of the solution.
The results of the experiments described in Section~\ref{sec:experi} demonstrate effectiveness of this algorithm\footnote{In this case, i.e., when we use Algorithm~\ref{alg:ngism} or Algorithm~\ref{alg:ngpsm} with Algorithm~\ref{alg:lnsel2}, we have to give a step-range $\{\underline{\lambda}_n,\overline{\lambda}_n\}$, a constant $c_1$ appearing in the Armijo condition, a common ratio $a$ and the number of trials $k$ of Algorithm~\ref{alg:lnsel2} as hyperparameters.}.

\section{Convergence Analysis}\label{sec:analys}
\subsection{Sequence Convergence Theorem}
Here, we first show that the limit inferiors of $\{f(x_n)\}$ generated by Algorithms~\ref{alg:ngism} and \ref{alg:ngpsm} are equal to the optimal value of the objective function $f$.
Next, we show that the generated sequence $\{x_n\}$ converges to a solution of Problem \eqref{eq:main}.
The following assumption is used to show convergence of Algorithms~\ref{alg:ngism} and \ref{alg:ngpsm}.
\begin{assum}[Step-Range Compositions]\label{assum:stpsz}
\begin{align*}
\sum_{n=1}^\infty\overline{\lambda}_n=\infty,\quad
\sum_{n=1}^\infty\overline{\lambda}_n^2<\infty,\quad
\lim_{n\to\infty}\frac{\underline{\lambda}_n}{\overline{\lambda}_n}=1,\quad
\sum_{n=1}^\infty(\overline{\lambda}_n-\underline{\lambda}_n)<\infty.
\end{align*}
\end{assum}

The following lemma states that some subsequence of the objective function value of the generated sequence converges to the optimal value.
This lemma is used to prove the main theorem described next.
\begin{lem}[Evaluation of the limit Inferior {\cite[Lemma~\ref*{lem:liminf}]{arXiv}}]\label{lem:liminf}
For a sequence $\{x_n\}$, if there exists $\alpha\in(0,\infty)$ such that, for all $y\in{}C$ and for all $n\in\nat$,
\begin{align}
\|x_{n+1}-y\|^2
\le\|x_n-y\|^2-2\alpha\sum_{i=1}^K\lambda_{n,i}(f_i(x_n)-f_i(y))+\overline{\lambda}_n^2M^2,\label{eq:basins}
\end{align}
then,
\begin{align*}
\varliminf_{n\to\infty}f(x_n)=\min_{x\in{}C}f(x).
\end{align*}
\end{lem}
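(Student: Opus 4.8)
The plan is to fix an optimizer $x^\star\in\argmin_{x\in C}f(x)$, which exists by Assumption~\ref{assum:exsol}, set $f^\star:=f(x^\star)=\min_{x\in C}f(x)$, and prove the two inequalities $\varliminf_{n\to\infty}f(x_n)\ge f^\star$ and $\varliminf_{n\to\infty}f(x_n)\le f^\star$ separately. The first is immediate once we note that the sequences to which this lemma is applied lie in $C$ (each $x_n$ being a value of $P_C$ in Algorithms~\ref{alg:ngism} and \ref{alg:ngpsm}), so that $f(x_n)\ge f^\star$ for every $n$ and hence $\varliminf_{n\to\infty}f(x_n)\ge f^\star$. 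All of the work goes into the reverse inequality.

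The central difficulty is that \eqref{eq:basins} controls the \emph{weighted} gap $\sum_{i=1}^K\lambda_{n,i}(f_i(x_n)-f_i(x^\star))$ rather than $f(x_n)-f^\star$ itself, and the individual differences $f_i(x_n)-f_i(x^\star)$ need not be nonnegative, so I cannot simply replace each $\lambda_{n,i}$ by the lower bound $\underline{\lambda}_n$. I would therefore split
\begin{align*}
\sum_{i=1}^K\lambda_{n,i}(f_i(x_n)-f_i(x^\star))
=\underline{\lambda}_n\bigl(f(x_n)-f^\star\bigr)+\sum_{i=1}^K(\lambda_{n,i}-\underline{\lambda}_n)(f_i(x_n)-f_i(x^\star)),
\end{align*}
and estimate the second sum using that each $f_i$ is $M_i$-Lipschitz on $C$, which is a consequence of Assumption~\ref{assum:sgbdd}; thus $|f_i(x_n)-f_i(x^\star)|\le M_i\|x_n-x^\star\|$ and the error term is bounded by $(\overline{\lambda}_n-\underline{\lambda}_n)M\|x_n-x^\star\|$. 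To make this error summable I first need boundedness of $\{\|x_n-x^\star\|\}$.

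For boundedness I would take $y=x^\star$ in \eqref{eq:basins}, discard the nonpositive term $-2\alpha\underline{\lambda}_n(f(x_n)-f^\star)$, and use the elementary estimate $\|x_n-x^\star\|\le\tfrac12\bigl(1+\|x_n-x^\star\|^2\bigr)$ to reach, with $a_n:=\|x_n-x^\star\|^2$, a recursion of the form $a_{n+1}\le(1+c_n)a_n+d_n$ where $\sum_n c_n<\infty$ and $\sum_n d_n<\infty$ by Assumption~\ref{assum:stpsz}. Dividing by the convergent partial products $\prod_{k<n}(1+c_k)$ turns this into the additive form $\hat a_{n+1}\le\hat a_n+\hat d_n$ with $\sum_n\hat d_n<\infty$, to which Proposition~\ref{prop:tanxu} applies; this yields convergence, hence boundedness, of $\{a_n\}$, say $\|x_n-x^\star\|\le D$ for all $n$.

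With $D$ in hand I would sum \eqref{eq:basins} over $n$. Telescoping the $\|x_n-x^\star\|^2$ terms and invoking $\sum_n\overline{\lambda}_n^2<\infty$ and $\sum_n(\overline{\lambda}_n-\underline{\lambda}_n)<\infty$ gives
\begin{align*}
2\alpha\sum_{n=1}^\infty\underline{\lambda}_n\bigl(f(x_n)-f^\star\bigr)
\le\|x_1-x^\star\|^2+M^2\sum_{n=1}^\infty\overline{\lambda}_n^2+2\alpha MD\sum_{n=1}^\infty(\overline{\lambda}_n-\underline{\lambda}_n)<\infty.
\end{align*}
Since $f(x_n)-f^\star\ge0$ and $\sum_n\underline{\lambda}_n=\infty$ (because $\underline{\lambda}_n/\overline{\lambda}_n\to1$ forces $\underline{\lambda}_n\ge\tfrac12\overline{\lambda}_n$ eventually, while $\sum_n\overline{\lambda}_n=\infty$), a finite value of the series forces $\varliminf_{n\to\infty}\bigl(f(x_n)-f^\star\bigr)=0$: otherwise $f(x_n)-f^\star$ would be bounded below by a positive constant for all large $n$ and the weighted series would diverge. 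This delivers $\varliminf_{n\to\infty}f(x_n)\le f^\star$, and combined with the first inequality completes the proof. I expect the boundedness step and the sign control of the $(\lambda_{n,i}-\underline{\lambda}_n)$-error to be the main obstacles, both of which hinge on the summability conditions $\sum_n\overline{\lambda}_n^2<\infty$ and $\sum_n(\overline{\lambda}_n-\underline{\lambda}_n)<\infty$ in Assumption~\ref{assum:stpsz}.
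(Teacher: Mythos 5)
Your proof is correct, but it follows a genuinely different route from the paper's. You run a classical telescoping (quasi-Fej\'er) argument: split the weighted gap as $\sum_{i=1}^K\lambda_{n,i}(f_i(x_n)-f_i(x^\star))=\underline{\lambda}_n(f(x_n)-f^\star)+\sum_{i=1}^K(\lambda_{n,i}-\underline{\lambda}_n)(f_i(x_n)-f_i(x^\star))$, control the second term by the $M_i$--Lipschitz continuity of $f_i$ on $C$ (a valid consequence of Assumption~\ref{assum:sgbdd}), establish boundedness of $\{x_n\}$ via Proposition~\ref{prop:tanxu}, sum \eqref{eq:basins} over $n$, and conclude from $\sum_n\underline{\lambda}_n=\infty$ together with finiteness of $\sum_n\underline{\lambda}_n(f(x_n)-f^\star)$. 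The paper instead never sums the inequality: it works with the normalized weighted quantity $\varliminf_{n\to\infty}\sum_{i=1}^K(\lambda_{n,i}/\overline{\lambda}_n)f_i(x_n)$, shows by two separate contradiction arguments that it equals $\min_{x\in C}f(x)$ (the nontrivial case uses an $\varepsilon$--level argument: an eventually positive weighted gap plus $\sum_n\overline{\lambda}_n=\infty$ forces $\|x_n-y\|^2$ to diverge negatively), and then identifies this quantity with $\varliminf_{n\to\infty}f(x_n)$ by exploiting the positivity $f_i\ge 0$ to bound $(\underline{\lambda}_n/\overline{\lambda}_n)f_i(x_n)\le(\lambda_{n,i}/\overline{\lambda}_n)f_i(x_n)\le f_i(x_n)$ term by term, invoking Proposition~\ref{prop:liminf} with $\underline{\lambda}_n/\overline{\lambda}_n\to1$. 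The trade-off is instructive: your argument needs the summable-gap condition $\sum_n(\overline{\lambda}_n-\underline{\lambda}_n)<\infty$, prior boundedness of the iterates, and the Lipschitz property, but it nowhere uses that each $f_i$ is nonnegative (only $f(x_n)\ge f^\star$), so it would survive dropping that structural hypothesis; the paper's argument is leaner on step-size assumptions---it uses only $\sum_n\overline{\lambda}_n=\infty$, $\overline{\lambda}_n\to0$, and the ratio condition, reserving $\sum_n(\overline{\lambda}_n-\underline{\lambda}_n)<\infty$ for the main theorem---but hinges essentially on $f_i\ge0$ for the weight-swapping step. Both proofs share the same implicit premise (also unstated in the lemma itself) that $\{x_n\}\subset C$.
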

\ifarxiv
\begin{proof}
Assume that $\varliminf_{n\to\infty}\sum_{i=1}^K(\lambda_{n,i}/\overline{\lambda}_n)f_i(x_n)\neq\min_{x\in{}C}f(x)$.
Then, either $\varliminf_{n\to\infty}\sum_{i=1}^K(\lambda_{n,i}/\overline{\lambda}_n)f_i(x_n)<\min_{x\in{}C}f(x)$ or $\min_{x\in{}C}f(x)<\varliminf_{n\to\infty}\sum_{i=1}^K(\lambda_{n,i}/\overline{\lambda}_n)f_i(x_n)$ holds.
First, we assume $\varliminf_{n\to\infty}\sum_{i=1}^K(\lambda_{n,i}/\overline{\lambda}_n)f_i(x_n)<\min_{x\in{}C}f(x)$.
Recall $\{x_n\}\subset{}C$ and the definition $f(x):=\sum_{i=1}^Kf_i(x)$ in the main problem~\eqref{eq:main}.
The property of the limit inferior and \cite[Exercise 4.1.31]{realan} ensure that
\begin{align*}
\min_{x\in{}C}f(x)
&\le\varliminf_{n\to\infty}f(x_n)\\
&=\varliminf_{n\to\infty}\frac{\underline{\lambda}_n}{\overline{\lambda}_n}\sum_{i=1}^Kf_i(x_n)
\end{align*}
Further, from the positivity of $f_i\quad(i=1,2,\ldots,K)$, the fact that $\underline{\lambda}_n\le\lambda_{n,i}$ and the assumption that $\varliminf_{n\to\infty}\sum_{i=1}^K(\lambda_{n,i}/\overline{\lambda}_n)f_i(x_n)<\min_{x\in{}C}f(x)$ lead to
\begin{align*}
\min_{x\in{}C}f(x)
&\le\varliminf_{n\to\infty}\sum_{i=1}^K\frac{\underline{\lambda}_n}{\overline{\lambda}_n}f_i(x_n)\\
&\le\varliminf_{n\to\infty}\sum_{i=1}^K\frac{\lambda_{n,i}}{\overline{\lambda}_n}f_i(x_n)\\
&<\min_{x\in{}C}f(x).
\end{align*}
This is a contradiction.
Next, we assume $\min_{x\in{}C}f(x)<\varliminf_{n\to\infty}\sum_{i=1}^K(\lambda_{n,i}/\overline{\lambda}_n)f_i(x_n)$ and let $\hat{y}\in\argmin_{x\in{}C}f(x)$.
Then, there exists $\varepsilon\in(0,\infty)$ such that
\begin{align*}
f(\hat{y})+2\varepsilon
=\varliminf_{n\to\infty}\sum_{i=1}^K\frac{\lambda_{n,i}}{\overline{\lambda}_n}f_i(x_n).
\end{align*}
From the definition of the limit inferior, there exists $n_0\in\nat$ such that, for all $n\in\nat$, if $n_0\le{}n$ then
\begin{align*}
\varliminf_{n\to\infty}\sum_{i=1}^K\frac{\lambda_{n,i}}{\overline{\lambda}_n}f_i(x_n)-\varepsilon
<\sum_{i=1}^K\frac{\lambda_{n,i}}{\overline{\lambda}_n}f_i(x_n).
\end{align*}
Now, $\lambda_{n,i}/\overline{\lambda}_n\le{}1$ and $0\le{}f_i(\hat{y})\quad(i=1,2,\ldots,K)$ hold.
Therefore, for all $n\in\nat$, if $n_0\le{}n$ then
\begin{align*}
\varepsilon
&=\varliminf_{n\to\infty}\sum_{i=1}^K\frac{\lambda_{n,i}}{\overline{\lambda}_n}f_i(x_n)-\varepsilon-f(\hat{y})\\
&<\sum_{i=1}^K\frac{\lambda_{n,i}}{\overline{\lambda}_n}f_i(x_n)-\sum_{i=1}^Kf_i(\hat{y})\\
&\le\sum_{i=1}^K\frac{\lambda_{n,i}}{\overline{\lambda}_n}(f_i(x_n)-f_i(\hat{y})).
\end{align*}
From inequality~\eqref{eq:basins}, for all $n\in\nat$, if $n_0\le{}n$, we have
\begin{align*}
\|x_{n+1}-\hat{y}\|^2
&\le\|x_n-\hat{y}\|^2-2\alpha\sum_{i=1}^K\lambda_{n,i}(f_i(x_n)-f_i(\hat{y}))+\overline{\lambda}_n^2M^2\\
&\le\|x_n-\hat{y}\|^2-2\alpha\overline{\lambda}_n\varepsilon+\overline{\lambda}_n^2M^2\\
&=\|x_n-\hat{y}\|^2-\overline{\lambda}_n(2\alpha\varepsilon-\overline{\lambda}_nM^2).
\end{align*}
From Assumption~\ref{assum:stpsz}, $n_1\in\nat$ exists such that $n_0\le{}n_1$ and, for all $n\in\nat$, if $n_1\le{}n$, $\overline{\lambda}_n\le\alpha\varepsilon/M^2$.
Hence, if $n_1\le{}n$, we have
\begin{align*}
0
&\le\|x_{n+1}-\hat{y}\|^2\\
&\le\|x_n-\hat{y}\|^2-\alpha\varepsilon\overline{\lambda}_n\\
&\le\|x_{n_1}-\hat{y}\|^2-\alpha\varepsilon\sum_{k=n_1}^n\overline{\lambda}_k.
\end{align*}
for all $n\in\nat$.
From Assumption~\ref{assum:stpsz}, the right side diverges negatively, which is a contradiction.
Overall, we have
\begin{align*}
\varliminf_{n\to\infty}\sum_{i=1}^K\frac{\lambda_{n,i}}{\overline{\lambda}_n}f_i(x_n)
=\min_{x\in{}C}f(x).
\end{align*}
Next, let us assume that $\varliminf_{n\to\infty}\sum_{i=1}^K(\lambda_{n,i}/\overline{\lambda}_n)f_i(x_n)\neq\varliminf_{n\to\infty}f(x_n)$.
Now, $\lambda_{n,i}/\overline{\lambda}_n\le{}1$ and $0\le{}f_i(x_n)\quad(i=1,2,\ldots,N)$ hold for all $n\in\nat$.
Therefore, we have
\begin{align*}
\varliminf_{n\to\infty}\sum_{i=1}^K\frac{\lambda_{n,i}}{\overline{\lambda}_n}f_i(x_n)
\le\varliminf_{n\to\infty}f(x_n).
\end{align*}
Hence, from the positivity of $f_i\quad(i=1,2,\ldots,K)$, the fact that $\underline{\lambda}_n\le\lambda_{n,i}$, and \cite[Exercise 4.1.31]{realan}, we have
\begin{align*}
\varliminf_{n\to\infty}f(x_n)
&=\varliminf_{n\to\infty}\frac{\underline{\lambda}_n}{\overline{\lambda}_n}f(x_n)\\
&\le\varliminf_{n\to\infty}\sum_{i=1}^K\frac{\lambda_{n,i}}{\overline{\lambda}_n}f_i(x_n)\\
&<\varliminf_{n\to\infty}f(x_n).
\end{align*}
However, this is a contradiction.
This completes the proof.\qed
\end{proof}
\fi

The following is the main theorem of this paper.
\begin{thm}[Main Theorem]
The sequence $\{x_n\}$ generated by Algorithm~\ref{alg:ngism} or \ref{alg:ngpsm} converges to an optimal solution to the main problem \eqref{eq:main}.
\end{thm}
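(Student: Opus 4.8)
The plan is to read off a quasi-Fej\'er inequality from the two fundamental properties, invoke Lemma~\ref{lem:liminf}, and then close with an Opial argument. First I would observe that the inequality of Lemma~\ref{lem:ngism} is precisely \eqref{eq:basins} with $\alpha=1$ and that of Lemma~\ref{lem:ngpsm} is \eqref{eq:basins} with $\alpha=1/K$; hence Lemma~\ref{lem:liminf} applies to the sequence generated by either algorithm and yields $\varliminf_{n\to\infty}f(x_n)=\min_{x\in C}f(x)$, a value I will write $f^\star$. I fix once and for all an optimizer $y\in X^\star:=\argmin_{x\in C}f(x)$, which is nonempty by Assumption~\ref{assum:exsol}.

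Next I would show that $\{x_n\}$ is quasi-Fej\'er monotone with respect to $X^\star$. Using $f_i\ge0$ together with $\underline{\lambda}_n\le\lambda_{n,i}\le\overline{\lambda}_n$ to estimate the middle term of \eqref{eq:basins} by $\sum_{i=1}^K\lambda_{n,i}(f_i(x_n)-f_i(y))\ge\underline{\lambda}_nf(x_n)-\overline{\lambda}_nf^\star\ge-(\overline{\lambda}_n-\underline{\lambda}_n)f^\star$, where the last step uses $f(x_n)\ge f^\star$, I obtain
\begin{align*}
\|x_{n+1}-y\|^2\le\|x_n-y\|^2+2\alpha(\overline{\lambda}_n-\underline{\lambda}_n)f^\star+\overline{\lambda}_n^2M^2.
\end{align*}
By Assumption~\ref{assum:stpsz} the added terms are summable, so Proposition~\ref{prop:tanxu} guarantees that $\lim_{n\to\infty}\|x_n-y\|$ exists for every $y\in X^\star$; in particular $\{x_n\}$ is bounded.

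The hard part, as I see it, is to upgrade the $\varliminf$ of Lemma~\ref{lem:liminf} to a full limit, equivalently to show that every weak cluster point of $\{x_n\}$ is optimal. Retaining the term dropped in the previous step and telescoping gives $\sum_{n}\underline{\lambda}_n(f(x_n)-f^\star)<\infty$; since $\sum_n\overline{\lambda}_n=\infty$ and $\underline{\lambda}_n/\overline{\lambda}_n\to1$ also force $\sum_n\underline{\lambda}_n=\infty$, the values $f(x_n)$ cannot stay bounded away from $f^\star$. I would combine this with the slow-variation estimate $|f(x_{n+1})-f(x_n)|\le M^2\overline{\lambda}_n\to0$, which follows from the one-step bound $\|x_{n+1}-x_n\|\le\overline{\lambda}_nM$ (valid for both algorithms by nonexpansivity of $P_C$ and Assumption~\ref{assum:sgbdd}, exactly as in the proof of Lemma~\ref{lem:ngism}) and the $M$-Lipschitz continuity of $f$ on $C$ implied by Assumption~\ref{assum:sgbdd}. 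An excursion argument then rules out any subsequence of $\{f(x_n)\}$ that stays above $f^\star+\varepsilon$, whence $\lim_{n\to\infty}f(x_n)=f^\star$. Consequently, for any $x_{n_k}\wto\bar{x}$, Proposition~\ref{prop:kimura} gives $\bar{x}\in C$ and Proposition~\ref{prop:cont} gives $f(\bar{x})\le\varliminf_kf(x_{n_k})=f^\star$, so $\bar{x}\in X^\star$.

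Finally I would pin down the weak limit with Opial's theorem. If $x_{n_k}\wto u$ and $x_{m_l}\wto v$ are two weakly convergent subsequences, then $u,v\in X^\star$ by the previous paragraph, so $\lim_n\|x_n-u\|$ and $\lim_n\|x_n-v\|$ both exist by the second paragraph. Were $u\neq v$, Proposition~\ref{prop:opial} applied along the two subsequences would give $\lim_n\|x_n-u\|<\lim_n\|x_n-v\|$ and $\lim_n\|x_n-v\|<\lim_n\|x_n-u\|$ simultaneously, a contradiction; hence $u=v$. Since $\{x_n\}$ is bounded, Proposition~\ref{prop:opial2} then delivers $x_n\wto x^\star$ for a single $x^\star\in X^\star$, which is the assertion. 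I expect the excursion argument of the third paragraph to be the only genuinely delicate point; everything else is bookkeeping with the inequalities and propositions already at hand.
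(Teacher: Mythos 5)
Your proof is correct, and its middle section takes a genuinely different route from the paper's. The two proofs open and close the same way: inequality \eqref{eq:basins} holds with $\alpha=1$ (Lemma~\ref{lem:ngism}) or $\alpha=1/K$ (Lemma~\ref{lem:ngpsm}); dropping the nonnegative part of the middle term and invoking Proposition~\ref{prop:tanxu} shows that $\lim_{n\to\infty}\|x_n-y\|$ exists for every optimizer $y$ and that $\{x_n\}$ is bounded; and Opial's theorem together with Proposition~\ref{prop:opial2} pins down the weak limit. The difference is how optimality of weak cluster points is reached. The paper relies only on Lemma~\ref{lem:liminf}: it extracts a single subsequence $\{x_{n_i}\}$ along which $f$ tends to $f^\star:=\min_{x\in C}f(x)$, shows via Propositions~\ref{prop:kimura} and \ref{prop:cont} that weak cluster points of that subsequence are optimal, proves $x_{n_i}\wto u$ by one Opial argument, and then runs a second Opial argument against an arbitrary subsequence of the whole sequence. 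You instead upgrade the limit inferior to a full limit $\lim_{n\to\infty}f(x_n)=f^\star$, combining the telescoped bound $\sum_n\underline{\lambda}_n(f(x_n)-f^\star)<\infty$, the divergence $\sum_n\underline{\lambda}_n=\infty$, and the slow-variation estimate $|f(x_{n+1})-f(x_n)|\le M^2\overline{\lambda}_n$; all three ingredients are correct consequences of Assumptions~\ref{assum:sgbdd} and \ref{assum:stpsz}, and the excursion argument you sketch does go through (disjoint excursions above the level $f^\star+\varepsilon$ each carry step-sum at least $\varepsilon/(2M^2)$, contradicting $\sum_{\{n:\,f(x_n)>f^\star+\varepsilon\}}\underline{\lambda}_n<\infty$), though it deserves to be written out in full. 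What your extra work buys is rigor at the paper's most delicate point: in the paper's second Opial chain, the equality $\lim_{i\to\infty}\|x_{n_i}-w\|=\lim_{n\to\infty}\|x_n-w\|$ is asserted for the weak limit $w$ of an arbitrary subsequence of $\{x_n\}$, yet the existence of $\lim_{n\to\infty}\|x_n-w\|$ is guaranteed by Proposition~\ref{prop:tanxu} only when $w\in\argmin_{x\in C}f(x)$, which the paper never verifies for such $w$. Because you show that every weak cluster point of the whole sequence is optimal before touching Opial, your single Opial argument needs no such step; your route is longer, but it closes this gap.
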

\begin{proof}
Let $\hat{y}\in\argmin_{x\in{}C}f(x)$ and fix $n\in\nat$.
From Lemmas~\ref{lem:ngism} and \ref{lem:ngpsm}, there exists $\alpha\in(0,\infty)$ such that
\begin{align*}
\|x_{n+1}-\hat{y}\|^2
\le\|x_n-\hat{y}\|^2-2\alpha\sum_{i=1}^K\lambda_{n,i}(f_i(x_n)-f_i(\hat{y}))+\overline{\lambda}_n^2M^2.
\end{align*}
From $0\le{}f_i(\hat{y}),f_i(x_n)\quad(i=1,2,\ldots,K)$, we have
\begin{align}
\|x_{n+1}-\hat{y}\|^2
&\le\|x_n-\hat{y}\|^2-2\alpha\underline{\lambda}_n\sum_{i=1}^Kf_i(x_n)+2\alpha\overline{\lambda}_n\sum_{i=1}^Kf_i(\hat{y})+\overline{\lambda}_n^2M^2\nonumber\\
&=\|x_n-\hat{y}\|^2-2\alpha\underline{\lambda}_n\sum_{i=1}^K(f_i(x_n)-f_i(\hat{y}))+2\alpha(\overline{\lambda}_n-\underline{\lambda}_n)\sum_{i=1}^Kf_i(\hat{y})+\overline{\lambda}_n^2M^2\nonumber\\
&\le\|x_n-\hat{y}\|^2+2\alpha{}f(\hat{y})(\overline{\lambda}_n-\underline{\lambda}_n)+\overline{\lambda}_n^2M^2\label{eq:daiji}\\
&\le\|x_1-\hat{y}\|^2+2\alpha{}f(\hat{y})\sum_{i=1}^n(\overline{\lambda}_i-\underline{\lambda}_i)+M^2\sum_{i=1}^n\overline{\lambda}_i^2.\nonumber
\end{align}
From Assumption~\ref{assum:stpsz}, the left side of the above inequality is bounded.
Hence, $\{x_n\}$ is bounded.
From Lemma~\ref{lem:liminf}, a subsequence $\{x_{n_i}\}\subset\{x_n\}$ and $u\in\argmin_{x\in{}C}f(x)$ exist such that $x_{n_i}\to{}u$.
Using \cite[Lemma~1.7.(ii)]{itvm} with inequality~\eqref{eq:daiji}, this implies $x_n\to{}u$.
This completes the proof.\qed
\end{proof}

\subsection{Convergence Rates}
To show the convergence rates of Algorithms~\ref{alg:ngism} and \ref{alg:ngpsm}, we assume $\underline{\lambda}_n:=\overline{\lambda}_n:=1/n$ for all $n\in\nat$.
We also assume the existence of $\mu\in(0,\infty)$ such that
\begin{align}
f(x)-f(\hat{y})\ge\mu\|x-\hat{y}\|^2\quad\left(x\in{}C,\hat{y}\in\argmin_{u\in{}C}f(u)\right).\label{eqn:wsm}
\end{align}
The strong convexity of $f$ implies Condition~\eqref{eqn:wsm}\cite[Inequality~(16)]{cvrate}.
First, We give the following lemma, which is required to show the convergence rates of Algorithms~\ref{alg:ngism} and \ref{alg:ngpsm}.
\begin{lem}{\rm (\cite[Lemma~2.1]{cvrate}, \cite[Lemma~4]{polyak})}\label{lem:tanaka}
Let $\{u_n\}\subset[0,\infty)$ be such that
\begin{align*}
u_{n+1}\le\left(1-\frac{p}{n}\right)u_n+\frac{d}{n^2}\quad(n\in\nat)
\end{align*}
for some $p,d\in(0,\infty)$.
Then
\begin{align*}
\begin{cases}
u_n=O(\frac{1}{n^p}) & (p < 1), \\
u_n=O(\frac{\log{}n}{n}) & (p = 1), \\
u_n\le{}\frac{d}{n(p-1)}+o(\frac{1}{n}) & (p > 1).
\end{cases}
\end{align*}
\end{lem}
Next, we prove two propositions that show the convergence rates of Algorithms~\ref{alg:ngism} and \ref{alg:ngpsm}.
\begin{prop}[Convergence Rate of Algorithm~\ref{alg:ngism}]
Let $\{x_n\}$ be a sequence generated by Algorithm~\ref{alg:ngism} and $\hat{y}\in\argmin_{y\in{}C}f(y)$.
Then, the following hold:
\begin{align*}
\begin{cases}
\|x_{n+1}-\hat{y}\|=O(\frac{1}{n^{2\mu}}) & (2\mu < 1), \\
\|x_{n+1}-\hat{y}\|=O(\frac{\log{}n}{n}) & (2\mu = 1), \\
\|x_{n+1}-\hat{y}\|\le{}\frac{M^2}{n(2\mu-1)}+o(\frac{1}{n}) & (2\mu > 1).
\end{cases}
\end{align*}
\end{prop}
\begin{proof}
From Lemma~\ref{lem:ngism} and inequality~\eqref{eqn:wsm}, we have
\begin{align*}
\|x_{n+1}-\hat{y}\|^2
\le\left(1-\frac{2\mu}{n}\right)\|x_n-\hat{y}\|^2+\frac{M^2}{n^2}.
\end{align*}
for all $n\in\nat$.
Lemma~\ref{lem:tanaka} with $p:=2\mu, d:=M^2$ completes the proof.
\qed
\end{proof}
This result implies that Algorithm~\ref{alg:ngism} is in the same class of convergence efficiency as the incremental subgradient algorithm\cite[Proposition~2.8]{cvrate}.

\begin{prop}[Convergence Rate of Algorithm~\ref{alg:ngpsm}]
Let $\{x_n\}$ be a sequence generated by Algorithm~\ref{alg:ngpsm} and $\hat{y}\in\argmin_{y\in{}C}f(y)$.
Then, the following hold:
\begin{align*}
\begin{cases}
\|x_{n+1}-\hat{y}\|=O(\frac{1}{n^{2\mu/K}}) & \left(\mu < \frac{K}{2}\right), \\
\|x_{n+1}-\hat{y}\|=O(\frac{\log{}n}{n}) & \left(\mu = \frac{K}{2}\right), \\
\|x_{n+1}-\hat{y}\|\le{}\frac{M^2}{n(2\mu/K-1)}+o(\frac{1}{n}) & \left(\mu > \frac{K}{2}\right).
\end{cases}
\end{align*}
\end{prop}
\begin{proof}
From Lemma~\ref{lem:ngpsm} and inequality~\eqref{eqn:wsm}, we have
\begin{align*}
\|x_{n+1}-y\|^2
\le\left(1-\frac{2\mu}{nK}\right)\|x_n-\hat{y}\|^2+\frac{M^2}{n^2}.
\end{align*}
for all $n\in\nat$.
Lemma~\ref{lem:tanaka} with $p:=2\mu/K, d:=M^2$ completes the proof.
\qed
\end{proof}

To above analyses assumed $\underline{\lambda}_n=\overline{\lambda}_n$.
However, Algorithms~\ref{alg:ngism} and \ref{alg:ngpsm} can use different values of $\underline{\lambda}_n$ and $\overline{\lambda}_n$.
This implies that Algorithms~\ref{alg:ngism} and \ref{alg:ngpsm} may converge faster than theoretical rates given here.

\section{Experiments}\label{sec:experi}
In this section, we present the results of experiments evaluating our algorithms and comparing them with the existing algorithms.
For our experiments, we used a MacPro (Late 2013) computer with a 3GHz 8-Core Intel Xeon E5 CPU, 32GB 1866MHz DDR3 memory, and 500GB flash storage.
The operating system was MacOS Sierra (version 10.12.6). The experimental codes were written in Python 3.6 and ran on the CPython implementation.

\subsection{Validation of Convergence Properties with a Simple Problem}
A concrete test problem with a closed-form solution is a good way to evaluate the performance of algorithms in detail \cite{tutsoy2016-2,tutsoy2016}.
Here, we used the existing and proposed algorithms to solve a simple problem.
The goals were to compare their performances under equal conditions, to use the best parameters for each algorithm calculated from the theoretical analyses, and to evaluate these algorithms with the detailed indicators such as the distance between an acquired solution and the actual solution of the test problem.
The test problem is as follows.
\begin{prob}[Test problem]\label{prob:test}
  Let $f_i(x):=(i+1)x_i^2\ (x\in\real^N; i=1,2,\ldots,n)$, $c\in\real^N$, and $r\in\real$.
  Then, we would like to
  \begin{align*}
    \text{minimize } & f(x):=\sum_{i=1}^Kf_i(x) \\
    \text{subject to } & \norm{x - c} \le r \text{ and }x_i=0\ (i=3,4,\ldots,N).
  \end{align*}
\end{prob}
This problem is obviously an instance of Problem~\ref{eq:main}.
Of course, the continuity of the objective function and the boundedness of the constraint ensure the above problem satisfies Assumption~\ref{assum:sgbdd}.
We set $c:=(2, 1, 0, \ldots, 0)^\top$ and $r:=1$.
The optimal solution is accordingly $x^\star=((2-/\sqrt{2})/2, (2-\sqrt{2})/2, 0, \ldots, 0)$.

We set the number of dimensions to $N:=16$, i.e., equal to the number of logical cores of the experimental computer.
We gave $x_1:=(2, 1, 0, \ldots, 0)^\top$, the center of the feasible set, as an initial point.
We selected the incremental and parallel subgradient algorithms for comparison.
These algorithms use a priori given learning rates; that is, they coincide with Algorithms~\ref{alg:ngism}, \ref{alg:ngpsm} with the settings $\underline{\lambda}_n=\overline{\lambda}_n=\lambda_n\in(0,\infty)\ (n\in\nat)$.
In this comparison, we gave learning rates of $\lambda_n:=1/(nN^2)$, which are appropriately chosen based on the following proposition related to the existing algorithms.
\begin{prop}[{\cite[Lemma~2.1]{iiduka2015}, \cite[Lemma~3.1]{yamada2005}}]
  Suppose that $f:\real^N\to\real$ is $c$-strongly convex and differentiable, $\nabla f:\real^N\to\real^N$ is $L$-Lipschitz continuous, and $\mu\in(0,2c/L^2)$.
  Define $T:\real^N\to\real^N$ by $T(x):=x-\mu\nabla f(x)\ (x\in\real^N)$.
  Then, $T$ is a contractive mapping.
\end{prop}
We set $\underline{\lambda}_n:=100/((n+10000)N^2),\overline{\lambda}_n:=100/(nN^2)$ for each $n\in\nat$ as the parameters of the proposed algorithms.
This step-range contains the learning rates of the existing algorithms.
We used Algorithm~\ref{alg:lnsel2} with the parameters $c_1:=0.99$, $a:=0.5$, and $k:=7$. We limited the iterations to 1,000 and evaluated the following indicators:
\begin{itemize}
  \item $F_n$: value of the objective function, i.e., $F_n:=\sum_{i=1}^nf_i(x_n)$,
  \item $D_n$: distance to the optimal solution, i.e., $D_n:=\norm{x^\star-x_n}$,
  \item $T_n$: running time of the algorithm.
\end{itemize}
The behaviors of $\{F_n\}$ and $\{D_n\}$ in each iteration $n$ are shown in Figure~\ref{fig:ez}.
\begin{figure}[htbp]
  \centering
  \subcaptionbox{Behavior of $F_n$ in each iteration $n$}{\includegraphics[width=0.45\textwidth]{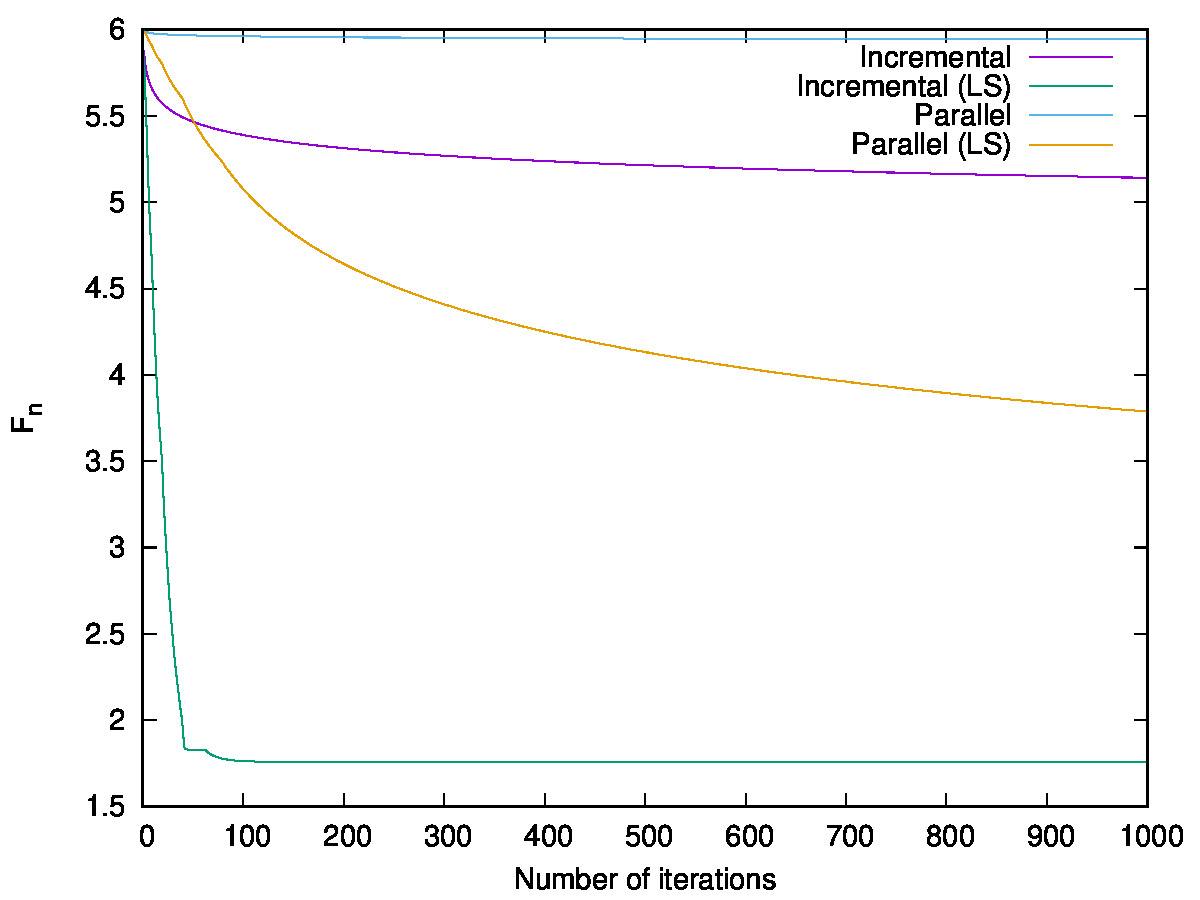}}
  \subcaptionbox{Behavior of $D_n$ in each iteration $n$}{\includegraphics[width=0.45\textwidth]{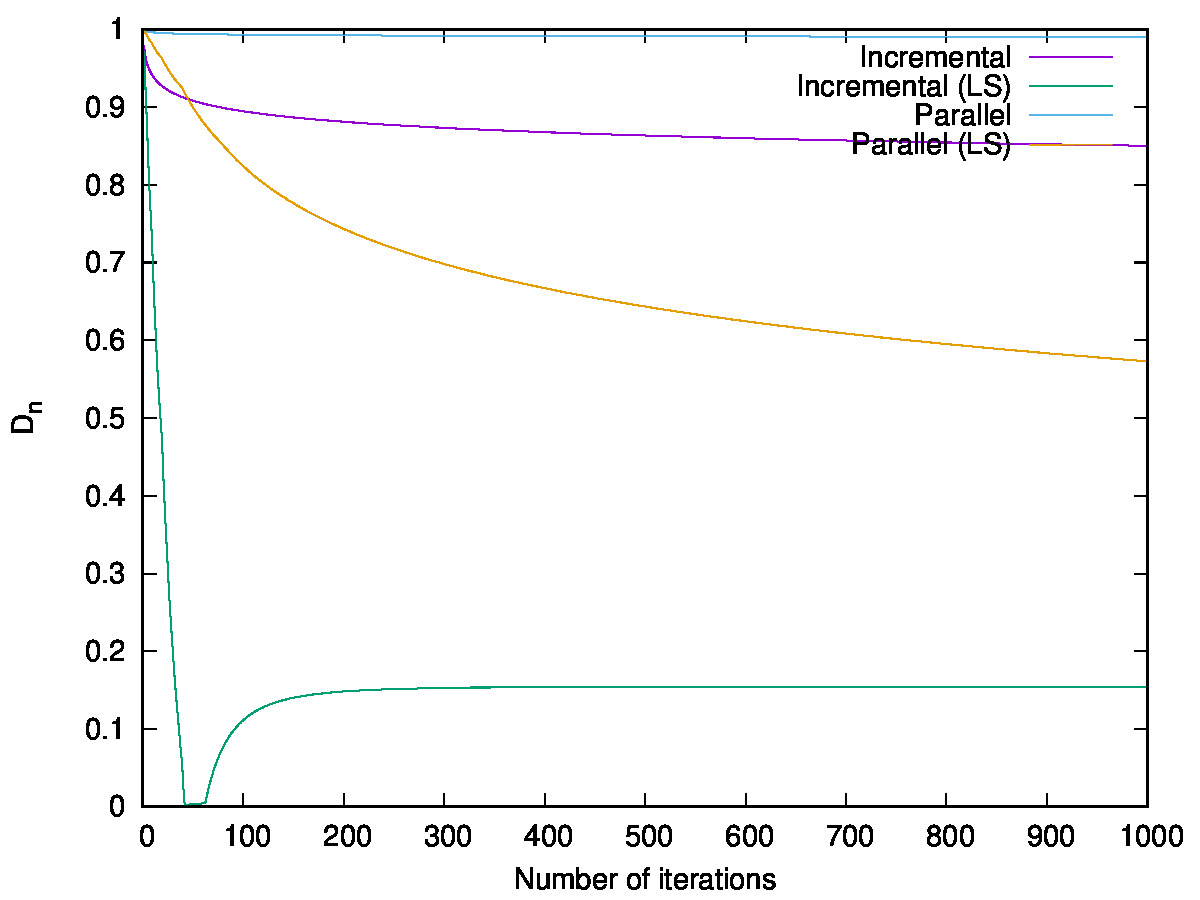}}
  \caption{Numerical comparison of running the existing and proposed algorithms on the test problem~\ref{prob:test}}\label{fig:ez}
\end{figure}
The result of Algorithm~\ref{alg:ngism} dropped to the optimal dramatically in terms of both $\{F_n\}$ and $\{D_n\}$ within the first fifty iterations.
The graphs of the other algorithms decreased similarly, but those of the algorithms with the line search decreased faster.
Table~\ref{tbl:rt} lists the running times of the existing and proposed algorithms for 1,000 iterations.
\begin{table}[htbp]
  \caption{Running time of each algorithm in solving the test problem~\ref{fig:ez}}\label{tbl:rt}
  \begin{tabular}{ll}
    \toprule
    Algorithm & Running time ($T_{1000}$ [s]) \\
    \midrule
    Incremental subgradient algorithm & 0.34341614 \\
    Algorithm~\ref{alg:ngism} & 0.97763861 \\
    Parallel subgradient algorithm & 0.11232432 \\
    Algorithm~\ref{alg:ngpsm} & 0.24512658 \\
    \bottomrule
  \end{tabular}
\end{table}
Compared with the existing algorithms, the proposed algorithms needed a bit more time for running.
However, they dramatically reduced the value of the objective function.
Therefore, they converged faster that the existing algorithms.

\subsection{Comparison of the existing and proposed algorithms in the task of learning with support vector machines}\label{subsec:1}
This subsection compares Algorithms~\ref{alg:ngism} and \ref{alg:ngpsm} with Pegasos \cite{pegasos}.
To evaluate their performance, we applied them to the following learning task.
\begin{prob}
[The task of learning with a support vector machine {\cite{pegasos}}]\label{prob:svm} Let $C$ be a positive real number.
Given a training set $\{(x_i,y_i)\}$, where $x_i\in\real^N\quad(i=1,2,\ldots,K)$ and $y_i\in\{1, -1\}\quad(i=1,2,\ldots,K)$, we would like to
\begin{align*}
\text{minimize } & f(w):=\frac{1}{C}\norm{w}^2+\frac{1}{K}\sum_{i=1}^K\max\{0, 1-y_i\ip{w,x_i}\}\\
\text{subject to } & w\in{}X:=\{w:\norm{w}\le\sqrt{C}\}.
\end{align*}
\end{prob}
This optimization problem is introduced in \cite{pegasos} for learning with a support vector machine.
The first term of the objective function is a penalty term that depends on the constraint set, and the second term is a loss function.
The loss function returns higher values if the learner $w$ can not classify an instance $(x_i,y_i)$ correctly.
The norm value of the learner $w$ does not affect the classification results due to the immutability of the signs of the decision function $\ip{w,x_i}$.
Therefore, we can limit this value to a constant $C$.
Now, let $f_i(w):=((1/C)\norm{w}^2+\max\{0,1-y_i\ip{w,x_i}\})/K$.
Then, $f=\sum_{i=1}^Kf_i$ holds and Problem~\ref{prob:svm} can be handled as an instance of Problem~\eqref{eq:main}.

We used the machine learning datasets shown in Table~\ref{tbl:datasets}.
\begin{table}[htbp]
\caption{Datasets used in our experiments}
\label{tbl:datasets}
\begin{tabular}{lllll}
\toprule
Name & \#Instances & \#Attributes & Missing Values & Attribute Characteristics \\
\midrule
Iris (binary class) & 100 & 4 & No & Real \\
Iris (multiclass) & 150 & 4 & No & Real \\
Australian & 590 & 14 & No & Real \\
Horse-colic & 368 & 27 & Yes & Categorical, Integer, Real \\
Breast-cancer-wisconsin & 699 & 10 & Yes & Integer \\
Census-income & 48842 & 14 & Yes & Categorical, Integer \\
Internet-advertisements & 3279 & 1558 & Yes & Categorical, Integer, Real \\
MNIST & 14780 & 784 & No & Integer \\
RANDOM1 & 20 & 100 & No & Real \\
RANDOM2 & 200 & 1000 & No & Real \\
\bottomrule
\end{tabular}
\end{table}
The ``australian'' data set is from LIBSVM Data \cite{libsvm}.
The ``MNIST'' data set contains handwritten ``0'' and ``1'' digits and is provided by \cite{mnist}. The ``RANDOM1'' and ``RANDOM2'' datasets were generated using the \texttt{sklearn.datasets.make\_classification} function with a fixed \texttt{random\_state}.
The others are from the UCI Machine Learning Repository \cite{uci}.
The number of classes of ``iris (multiclass)'' is three, and the others are binary classification datasets.

Missing values were complemented by using the \texttt{sklearn.impute.SimpleImputer} class.
Categorical attributes were binarized using the \texttt{sklearn.preprocessing.OneHotEncoder} class.
Each data set was scaled using the \texttt{sklearn.preprocessing.StandardScaler} class.
These preprocessing methods and classes are from the scikit-learn \cite{scikit-learn} package for Python3.

The Pegasos algorithm used for this comparison is listed as Algorithm~\ref{alg:pegasos}.
\begin{algorithm}
\caption{Pegasos \cite[Fig.~1]{pegasos}}
\label{alg:pegasos}
\begin{algorithmic}[1]
    \State{$n\gets{}1, w_1\in{}X$.}
    \Loop{}
\State{$i_n\in\{1,2,\ldots,K\}$.}\Comment{Chosen uniformly at random}
      \State{$g\in\partial{}f_{i_n}(w_n)$.}
      \State{$\lambda_n:=C/n$.}
      \State{$w_{n+1}\gets{}P_X(w_n-\lambda_ng_n)$.}
      \State{$n\gets{}n+1$.}
    \EndLoop{}
  \end{algorithmic}
\end{algorithm}

We set $C:=10^{-1}$ and gave $\lambda_n:=10^{-1}/(n+10^{8})$ to Algorithms~\ref{alg:ngism} and \ref{alg:ngpsm}.
We used Algorithm~\ref{alg:lnsel2} with $c_1:=0.99$ for the line search step in Algorithms~\ref{alg:ngism} and \ref{alg:ngpsm}.
The main loops in Algorithms~\ref{alg:ngism} and \ref{alg:pegasos} were iterated $100K$ times, while the main loop in Algorithm~\ref{alg:ngpsm} was iterated $100$ times.
This setting means that the algorithms could refer to each of the functions $f_i\,(i=1,2,\ldots,K)$ 1000 times.

We added scores of the SMO algorithm, one of the major algorithms for learning with a support vector machine, to the experimental results for each dataset.
We used the implementation of the SMO algorithm in Python \footnote{\url{https://github.com/LasseRegin/SVM-w-SMO}} for calculating these scores.

First, let us look at the results for the iris (binary class) data set.
Table~\ref{tbl:iris2} lists the computational times for learning, the classification scores on the training and test sets, and the values of the objective function.
\begin{table}
\caption{Iris (binary class)}
\label{tbl:iris2}
\begin{tabular}{lllll}
\toprule
Algorithm & Time [sec] & Score (Training) & Score (Test) & Objective \\
\midrule
Pegasos & 0.12741225 & 1.00000000 & 1.00000000 & 0.95967555 \\
Algorithm~\ref{alg:ngism} & 0.28701049 & 1.00000000 & 1.00000000 & 0.94237229 \\
Algorithm~\ref{alg:ngpsm} & 0.03734168 & 1.00000000 & 1.00000000 & 0.91133305 \\
\midrule
SMO Algorithm & 0.00515115 & 1.00000000 & 1.00000000 & -- \\
\bottomrule
\end{tabular}
\end{table}
We used the \texttt{sklearn.model\_selection.train\_test\_split} method provided by the scikit-learn package \cite{scikit-learn} to split the dataset into training and test sets.
The number of instances in the training set was 30 and the number of instances in the test set was 70.
The results indicate that Algorithm~\ref{alg:ngpsm} performed better than Pegasos and Algorithm~\ref{alg:ngism} in terms of computational time and value of the objective function.
In addition, Algorithm~\ref{alg:ngism} worked out a better approximation than Pegasos did in terms of the objective function.
Hence, Algorithms~\ref{alg:ngism} and \ref{alg:ngpsm} ran more efficiently than the existing algorithm.
However, the SMO algorithm ran more quickly than the other algorithms, while keeping the highest score.

Next, let us look at the results of the multiclass classification using the iris (multiclass) dataset.
Table~\ref{tbl:iris3} lists the computational times for learning and the classification scores on the training and test sets.
\begin{table}
\caption{Iris (multiclass; Algorithms~\ref{alg:ngism}, \ref{alg:ngpsm}, and \ref{alg:pegasos} are used as solvers for the subproblem appearing in this multiclass classification experiment.)}
\label{tbl:iris3}
\begin{tabular}{lllll}
\toprule
Algorithm & Time [sec] & Score (Training) & Score (Test) & Avg. Objective \\
\midrule
Pegasos & 0.59731907 & 0.77777778 & 0.79047619 & 0.98524879 \\
Algorithm~\ref{alg:ngism} & 1.30901892 & 0.77777778 & 0.80952381 & 0.97505393 \\
Algorithm~\ref{alg:ngpsm} & 0.08996129 & 0.80000000 & 0.81904762 & 0.95314453 \\
\midrule
SMO Algorithm & 0.05340354 & 0.80000000 & 0.82857143 & -- \\
\bottomrule
\end{tabular}
\end{table}
We used the \texttt{sklearn.model\_selection.train\_test\_split} method provided by the scikit-learn package \cite{scikit-learn} to split the dataset into training and test sets.
To construct multiclass classifiers from Algorithm~\ref{alg:ngism}, \ref{alg:ngpsm}, and \ref{alg:pegasos}, we used \texttt{sklearn.multiclass.OneVsRestClassifier} class which provides a construction of one-versus-the-rest (OvR) multiclass classifiers.
In this experiment, the number of instances in the training set was 45 and the number of instances in the test set was 105.
The results show that Algorithms~\ref{alg:ngism} and \ref{alg:ngpsm} performed better than Pegasos with respect to their scores for the training and test sets.
In addition, the computational time of Algorithm~\ref{alg:ngpsm} was shorter than those of Pegasos and Algorithm~\ref{alg:ngism}.
In this case, Algorithm~\ref{alg:ngpsm} learned a classifier whose classification score is similar to the one of the SMO algorithm in almost same running time.

To compare the algorithms in detail, we conducted experiments on other datasets: australian, horse-colic, breast-cancer-wisconsin, census-income, internet-advertisements, MNIST, RANDOM1 and RANDOM2.
We performed a stratified five-fold cross-validation with the \texttt{sklearn.}\texttt{model\_selection.}\\{}\texttt{StratifiedKFold} class.
Table~\ref{tbl:results} shows the averages of the computational times for learning, the classification scores on the test sets, and the values of the objective function for each dataset.
TLE (time limit exceeded) in the table means that the experiment was compulsorily terminated because the running time of the SMO algorithm excessively exceeded those of the other algorithms.
The classification scores are calculated using the following formula implemented as the \texttt{sklearn.base.ClassifierMixin.score} method,
\begin{align*}
\text{(Score)}:=\frac{\text{(\#Accurate Instances)}}{\text{(\#Instances)}}.
\end{align*}
This value is an increasing evaluation of goodness of fit \cite[Section~4]{scikit-learn}.

\begin{landscape}
  \begin{table}[H]
    \caption{Averages of computational times for learning, classification scores on the test sets, and values of the objective function for each dataset}\label{tbl:results}
    \begin{tabular}{l|lll|lll}
      \toprule
                                & \multicolumn{3}{l|}{Australian}      & \multicolumn{3}{l}{Horse-colic} \\
      Algorithm                 & Time [sec] & Score      & Objective  & Time [sec] & Score      & Objective \\
      \midrule
      Pegasos                   & 2.42488674 & 0.82920957 & 0.99754775 & 1.54011672 & 0.71261261 & 0.99908892 \\
      Algorithm~\ref{alg:ngism} & 6.04898934 & 0.84939531 & 0.99070582 & 3.97349780 & 0.70713213 & 0.99907467 \\
      Algorithm~\ref{alg:ngpsm} & 0.06925168 & 0.85227300 & 0.95032527 & 0.05841917 & 0.72620120 & 0.96485216 \\
      \midrule
      SMO Algorithm             & 1.63863547 & 0.86238696 & --         & 4.96065068 & 0.71193694 & -- \\
      \bottomrule
      \toprule
                                & \multicolumn{3}{l|}{Breast-cancer-wisconsin} & \multicolumn{3}{l}{Census-income} \\
      Algorithm                 & Time [sec] & Score      & Objective  & Time [sec] & Score      & Objective \\
      \midrule
      Pegasos                   & 2.49116932 & 0.96843767 & 0.99228738 & 180.51582360 & 0.70191638 & 0.99995947 \\
      Algorithm~\ref{alg:ngism} & 6.12068390 & 0.96558053 & 0.95931909 & 456.40569000 & 0.71029029 & 0.99909114 \\
      Algorithm~\ref{alg:ngpsm} & 0.06906789 & 0.96558053 & 0.82970374 &   0.52751211 & 0.71031078 & 0.96254982 \\
      \midrule
      SMO Algorithm             & 0.98688517 & 0.96989708 & --         & TLE          & --         & --         \\
      \bottomrule
      \toprule
                                & \multicolumn{3}{l|}{Internet-advertisements} & \multicolumn{3}{l}{MNIST} \\
      Algorithm                 & Time [sec]  & Score      & Objective  & Time [sec]   & Score      & Objective \\
      \midrule
      Pegasos                   & 17.90985671 & 0.95730497 & 0.99954933 &  69.15901990 & 0.98687356 & 0.99894996 \\
      Algorithm~\ref{alg:ngism} & 44.18707687 & 0.59436744 & 0.99972300 & 153.40252700 & 0.99032472 & 0.99827819 \\
      Algorithm~\ref{alg:ngpsm} &  0.30783534 & 0.95517036 & 0.87879677 &   0.87765352 & 0.99269253 & 0.60622818 \\
      \midrule
      SMO Algorithm             & TLE         & --         & --         & TLE         & --         & --          \\
      \bottomrule
      \toprule
                                & \multicolumn{3}{l|}{RANDOM1}          & \multicolumn{3}{l}{RANDOM2} \\
      Algorithm                 & Time [sec]  & Score      & Objective  & Time [sec]   & Score      & Objective \\
      \midrule
      Pegasos                   & 0.34232559 & 0.88000000 & 0.99113491  &  3.99642010 & 0.84699855 & 0.99941910 \\
      Algorithm~\ref{alg:ngism} & 1.38527165 & 0.86000000 & 0.99789224  & 17.21619467 & 0.68499385 & 0.99982958 \\
      Algorithm~\ref{alg:ngpsm} & 0.03952229 & 0.90000000 & 0.93026020  &  0.04702928 & 0.87099875 & 0.95740116 \\
      \midrule
      SMO Algorithm             & 0.09197520 & 0.84000000 & --          & 16.82656507 & 0.79700312 & --         \\
      \bottomrule
    \end{tabular}
  \end{table}
\end{landscape}

Let us evaluate the computational times for learning, the classification scores on the test sets, and the values of the objective function in order.
For a detailed, fair, statistical comparison, we used an analysis of variance (ANOVA) test and Tukey--Kramer's honestly significant difference (HSD) test.
We used the \texttt{scipy.stats.f\_oneway} method in the SciPy library as the implementation of the ANOVA tests and the \texttt{statsmodels.stats.multicomp.pairwise\_tukeyhsd} method in the StatsModels package as the implementation of Tukey--Kramer's HSD test.
The ANOVA test examines whether the hypothesis that the given groups have the same population mean is rejected or not.
Therefore, we can use it for finding an experimental result that has a significant difference. Tukey--Kramer's HSD test can be used to find specifically which pair has a significant difference in groups.
We set 0.05 (5\%) as the significance level for the ANOVA and Tukey--Kramer's HSD tests and used the results of each fold of the cross-validation for the statistical evaluations described below.

First, we consider the computation times for learning.
All $p$-values computed by the ANOVA tests were much less than 0.05; this range was from $10^{-26}$ to $10^{-8}$.
This implies that a significant difference exists in terms of the computation time between the algorithms for every dataset.
The results of the Tukey--Kramer's HSD tests showed that the computation times of Algorithm~\ref{alg:ngpsm} for the australian, horse-colic, breast-cancer-wisconsin, census-income, internet-advertisements, and MNIST datasets were significantly shorter than those of Pegasos, Algorithm~\ref{alg:ngism} and the SMO algorithm.
However, the null hypotheses about Algorithm~\ref{alg:ngpsm} and the SMO algorithm for the RANDOM1 dataset, and Algorithm~\ref{alg:ngpsm} and Pegasos for the RANDOM2 dataset were not rejected.
Therefore, for most of the practical datasets, Algorithm~\ref{alg:ngpsm} runs significantly faster than the existing algorithms.
However, it seems that there are a few cases where the computation time of the Algorithm~\ref{alg:ngpsm} roughly equals those of the existing algorithms.

Next, we consider the classification scores on the test sets.
The ANOVA tests indicate that significant differences may exist in the census-income, internet-advertisements, and RANDOM2 datasets.
However, Tukey-Kramer's HSD test could not reject the null hypotheses between any two algorithms for the census-income dataset.
The results of the Tukey-Kramer's HSD tests showed that the scores of Algorithm~\ref{alg:ngism} were significantly worse than those of the other algorithms for the internet-advertisements and RANDOM2 datasets.
Moreover, they showed that the scores of Algorithm~\ref{alg:ngpsm} were significantly better than those of Algorithm~\ref{alg:ngism} and the SMO algorithm for the RANDOM2 dataset.
Although each algorithm may have advantages or disadvantages compared with the others on certain datasets, the classification scores of the four algorithms were roughly similar as a whole.

Next, we consider the values of the objective function.
All $p$-values computed by the ANOVA tests were much less than 0.05; this range was from $10^{-32}$ to $10^{-12}$.
This implies that a significant difference exists in terms of the values of the objective function between the algorithms for every dataset.
The results of the Tukey-Kramer's HSD tests showed that the values of the objective function of Algorithm~\ref{alg:ngpsm} were significantly lower than those of Pegasos and Algorithm~\ref{alg:ngism} for all datasets.
Therefore, Algorithm~\ref{alg:ngism} reduced the value of objective function more than the other algorithms.

Figure~\ref{fig:boxplot} illustrates a box-plot comparison of Pegasos, Algorithm~\ref{alg:ngism}, Algorithm~\ref{alg:ngpsm}, and the SMO Algorithm in terms of classification scores on the test sets.
We used the results of all folds of the cross-validations and all datasets shown in Table 5 for making this box-plot comparison.
\begin{figure}[htbp]
  \centering
  \includegraphics[width=4truein]{./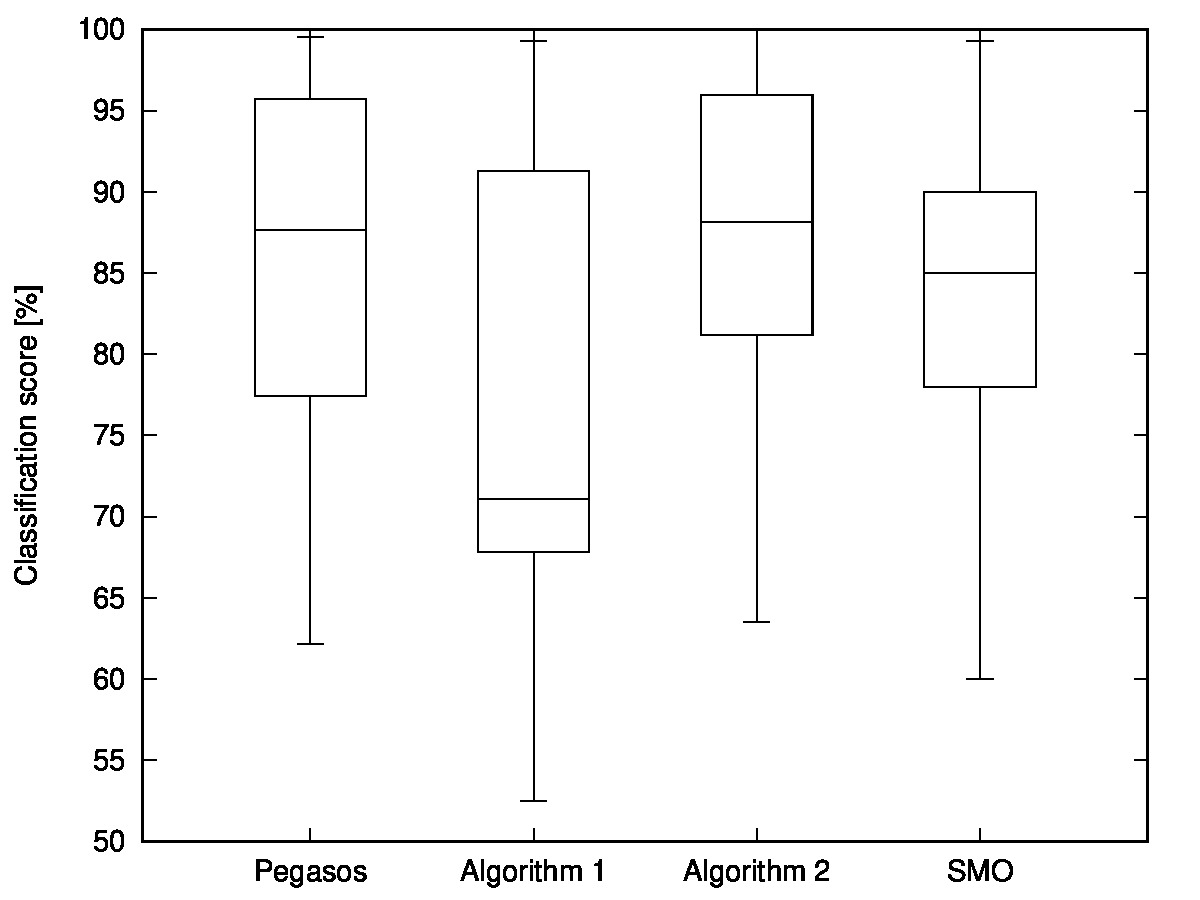}
  \caption{Box-plot comparison of Pegasos, Algorithm~\ref{alg:ngism}, Algorithm~\ref{alg:ngpsm} and SMO Algorithm in terms of classification scores on the test sets}\label{fig:boxplot}
\end{figure}
The horizontal lines in the boxes represent the median scores, and the boxes represent the upper and lower quartiles of the resulting scores.
Similar to the above discussion of the average, we find that Algorithm 2 has the best median of the classification scores among the four algorithms.
The results of Pegasos were similar to those of Algorithm~\ref{alg:ngpsm}; however, the computation time of Algorithm~\ref{alg:ngpsm} was dramatically shorter than that of Pegasos.
Therefore, box-plot comparison also shows that Algorithm~\ref{alg:ngpsm} is the most useful method for learning with a support vector machine.

In conclusion, the above comparison indicates that, whichever algorithm we use, we can obtain classifiers whose classification abilities are similar.
However, Algorithm~\ref{alg:ngpsm} runs faster than the other algorithms, and it reduces the value of the objective function more.
Therefore, the series of experiments and considerations lead us to conclude that Algorithm~\ref{alg:ngpsm} is useful for learning with a support vector machine.

\subsection{Application to learning multilayer neural networks}
Let us consider using the proposed algorithms to learn a multilayer neural network with.
Our algorithms are not limited to being used for learning support vector machines; they can also be used for optimizing general functions.
Therefore, we can also use them for learning a multilayer neural network.
Here, we should note that the incremental subgradient algorithm is a specialization of the stochastic subgradient algorithm, which is a useful algorithm for learning a neural network.
Hence, we decided to apply it to a concrete task for learning a multilayer neural network and evaluate its applicability to learning deep neural networks.

We used the MNIST database \cite{mnist} of handwritten digits for this experiment.
The goal is recognizing what Arabic numerals are written on the given images.
To achieve this goal, we can use 60,000 examples contained in the training set.
Each example is composed of a $28\times 28$ image that expresses a handwritten digit and its corresponding label that is an integer number from zero to nine.
For the evaluation and comparison of the learning results, we used a test set containing 10,000 examples formatted in the same way.

We constructed and trained a multilayer neural network shown in Figure~\ref{fig:neuralnet} for learning the MNIST database.
\begin{figure}[htbp]
  \centering
  \includegraphics[width=\textwidth]{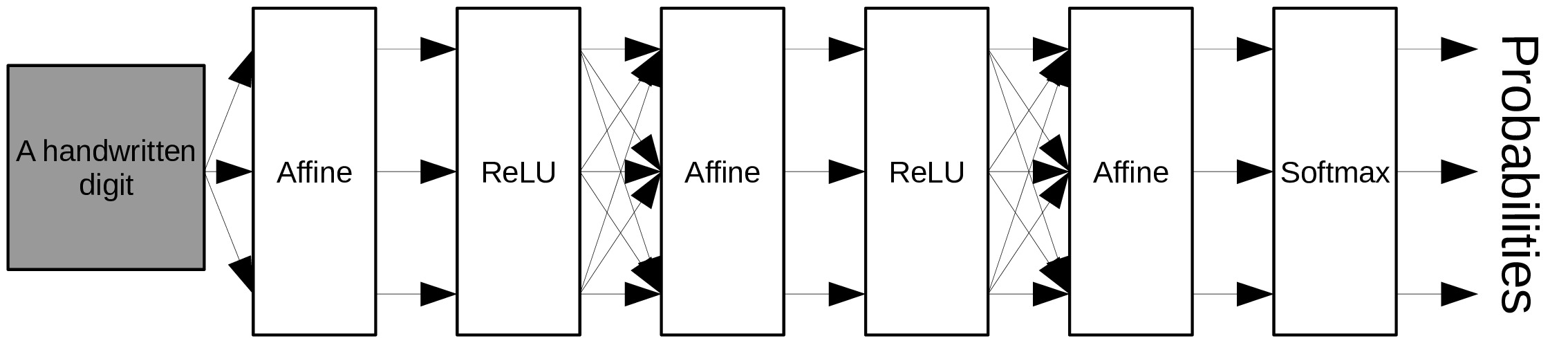}
  \caption{Neural network diagram used to recognizing the MNIST handwritten digits}\label{fig:neuralnet}
\end{figure}
We used three Affine layers with two ReLU (Rectified Linear Unit) activation functions and, for the output, a Softmax activation function.
We used the cross-entropy error function as the objective function for training the neural networks.

An Affine layer $A_{W,b}$ transforms a given vector $x\in\mathbb{R}^n$ into
\begin{align*}
A_{W,b}(x):=Wx+b
\end{align*}
with the parameter $W\in\mathbb{R}^{m\times n}$ and $b\in\mathbb{R}^m$, where $n$ is the number of dimensions of the input vector and $m$ is the number of dimensions of the output vector.
The first Affine layer transforms a $784(=28\times 28)$-dimensional vector, which expresses a given image, into a 300-dimensional vector.
The second Affine layer transforms a 300-dimensional vector into a 100-dimensional vector.
The third Affine layer transforms a 100-dimensional vectors into a 10-dimensional vector, which expresses each probability that the given image is the corresponding number.
We used the number of dimensions described in \cite{lecun1998} for each Affine layer.

The ReLU function transforms each element $x_k\ (k=1,2,\ldots,n)$ of a given vector $x\in\mathbb{R}^n$ into $\max\{x_k, 0\}$.
The Softmax function transforms a given vector $x:=(x_k)_{k=1}^n\in\mathbb{R}^n$ as follows:
\begin{align*}
\mathrm{Softmax}(x):=\frac{1}{\sum_{k=1}^ne^{x_k}}(e^{x_1}, e^{x_2}, \ldots, e^{x_k})^\top.
\end{align*}
We define the cross-entropy error $E:\mathbb{R}^n\to\mathbb{R}$, which is used as the objective function for training neural networks, as follows:
\begin{align*}
E(x):=-\sum_{k=0}^{9}y_k\log(x_k),
\end{align*}
where the vector $x:=(x_k)_{k=0}^{9}\in\mathbb{R}^{10}$ is the output of the current neural network and $y_k\ (k=0,1,2,\ldots,9)$ is one if the label is $k$ and zero otherwise.

In this experiment, we wanted to minimize the cross-entropy error of the training dataset concerning the parameters $W_k, b_k$ for each Affine layer $A_k (k=1,2,3)$.
The number of dimensions of the parameters is $784\times 300+300=235500$ for the first Affine layer, $300\times 100+100=30100$ for the second Affine layer, and $100\times 10+10=1010$ for the third Affine layer.
Hence, the total number of dimensions of the variables for this minimization problem is $235500+30100+1010=266610$.

We ran Algorithm~\ref{alg:ngism} with the Discrete Argmin Line Search described in Algorithm~\ref{alg:dargmin} and compared its behaviors when we used a constant learning rate $\lambda_{n,i}:=0.1$, diminishing learning rate $\lambda_{n,i}:=(0.1\times 20)/n$, and learning rates found by the line search in the step-range $[(0.1\times 20)/(n+100),(0.1\times 20)/n]$ for the number of iterations ($n=1,2,\ldots$).
We set the coefficients for each step-size such that these upper bounds would be equal to each other when the algorithm exits.
To use the proposed algorithm, we have to compute the subgradients of the objective function.
Here, we used approximations of them worked out by the backpropagation algorithm.

We used the computer described in Subsection~2.1 for these experiments.
We wrote the experimental codes in Python~3.6.6 with the NumPy~1.15.4 library.
We divided the datasets into 600 mini-batches, each of which contained 100 examples; in other words, we solved the problem to minimize the sum of 600 objective functions.
We converted and flattened the handwritten digit images into vectors and divided their elements by 255 for regularization.
The parameters for each Affine layer were initialized using a Gaussian distribution of mean zero and variance 0.01.

Figure~\ref{fig:nnfunc} shows the behavior of the values of the objective function for each iteration.
\begin{figure}[htbp]
  \centering
  \includegraphics[width=.45\textwidth]{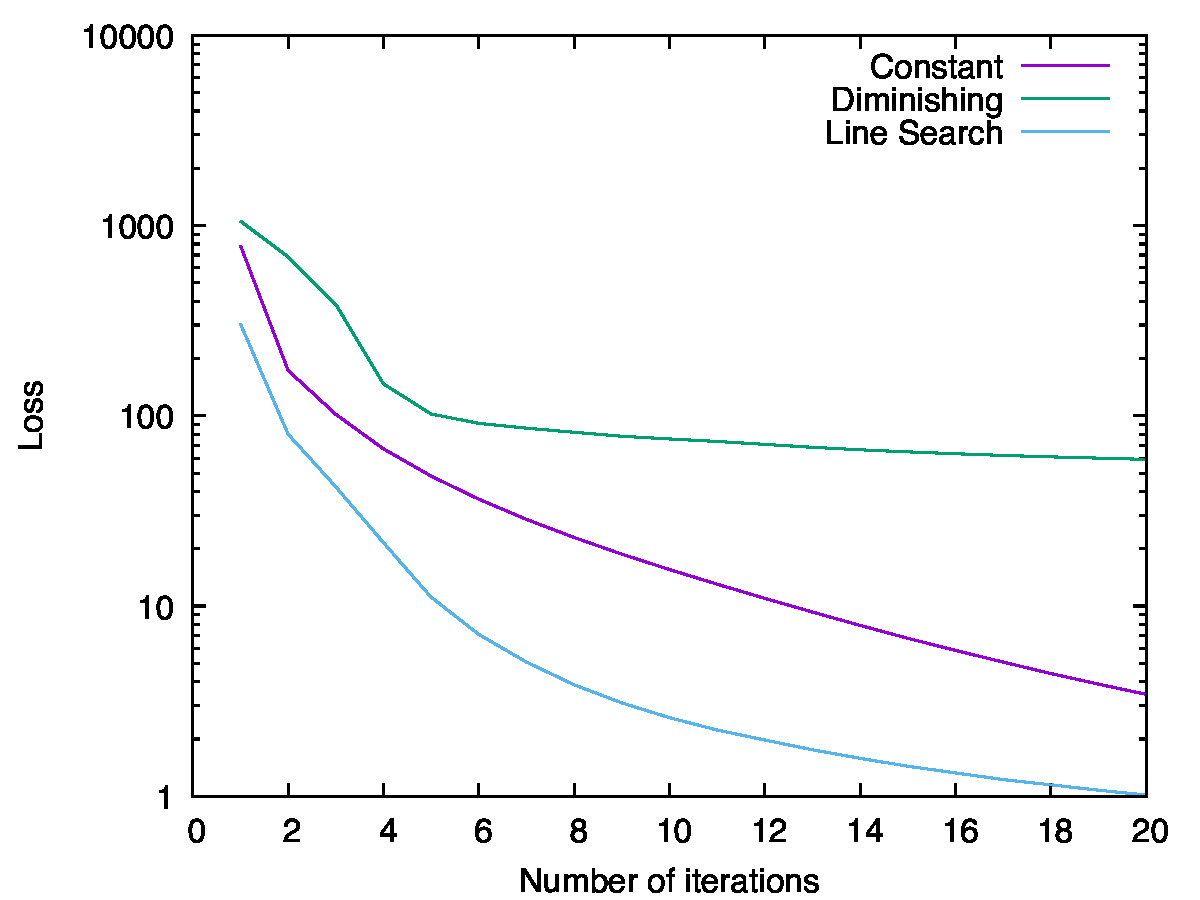}
  \caption{Behavior of the values of the objective function for each iteration}\label{fig:nnfunc}
\end{figure}
The violet line labelled ``Constant'' shows the result of using the constant learning rate, while the green line labelled ``Diminishing'' shows the result of using the diminishing learning rate, and the cyan line labelled ``Line Search'' shows that of using the learning rate computed with the line search.
Overall, we can see that all the results decrease monotonously.
This implies that Algorithm~\ref{alg:ngism} can minimize the objective function with any of the above learning rate settings.
The range of reduction of the result by using the diminishing learning rate is less than others.
One possible reason is that learning rate becomes too small to minimize the objective function sufficiently.
Indeed, from the second to fourth iteration, the result for the diminishing learning rate fell steeply, but this variation became smaller and smaller after the sixth iteration.
In contrast to this result, the results for the constant learning rate and the learning rate computed with the line search minimized the objective function continuously and dramatically.
In particular, we can see that the line search found the most efficient learning rates of these experiments.

Next, let us examine the classification accuracies.
\begin{figure}[htbp]
  \centering
  \subcaptionbox{Result for training data\label{fig:nnscore1}}{\includegraphics[width=.45\textwidth]{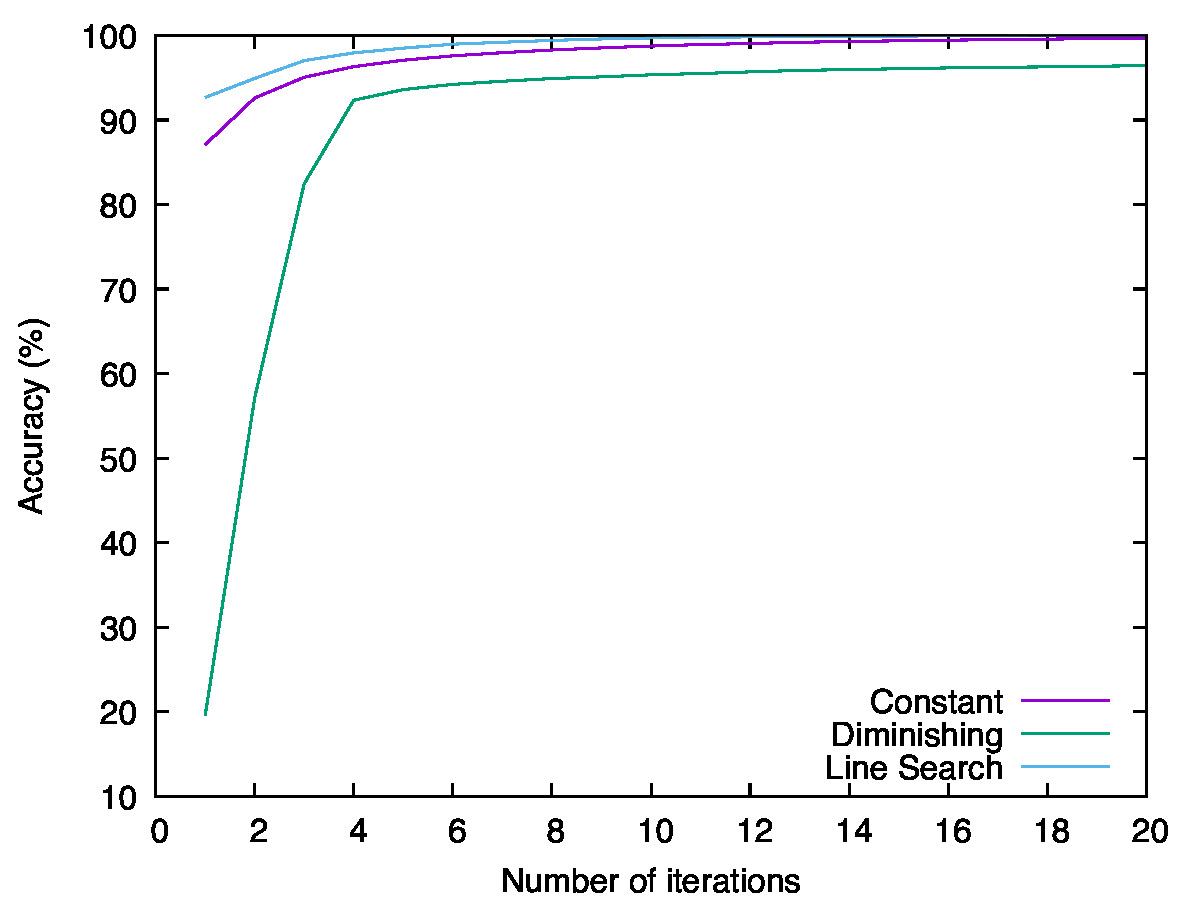}}
  \subcaptionbox{Result for test data\label{fig:nnscore2}}{\includegraphics[width=.45\textwidth]{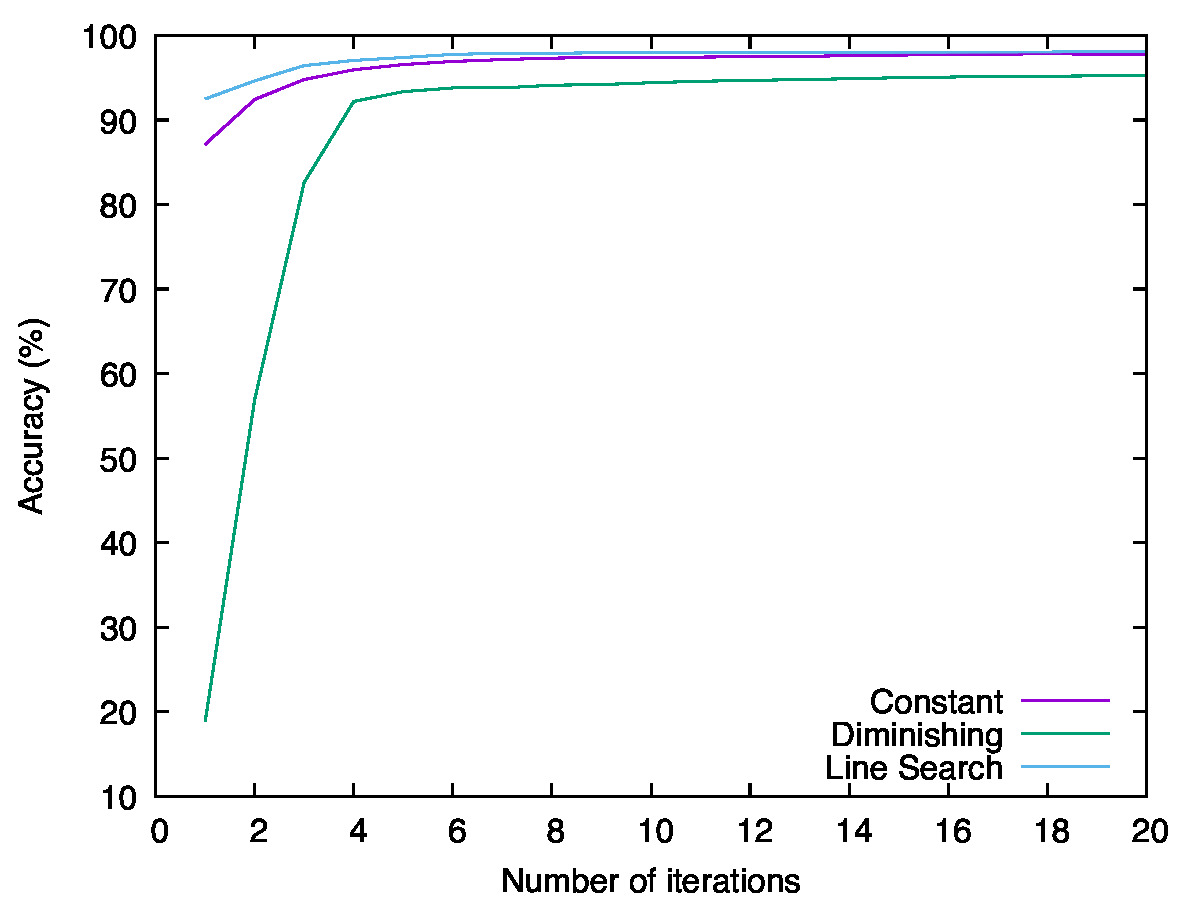}}
  \caption{Behavior of the classification accuracies for training and test data}\label{fig:nnscore}
\end{figure}
Figure~\ref{fig:nnscore} shows the behavior of the classification accuracies for the training and test data.
The left-hand graph (Figure~\ref{fig:nnscore1}) shows the classification accuracies for the training data and the right-hand graph (Figure~\ref{fig:nnscore2}) shows those for the test data.
The legends of these graphs are the same as in Figure~\ref{fig:nnfunc}.
We can see that all the results increased, heading for 100\%.
For both data, the score of ``Line Search'' is higher than others and the score of ``Diminishing'' is the lowest.
This order is the same as what we saw in Figure~2.
Therefore, using the learning rates computed by the line search makes us able to minimize the objective function most and to achieve the best parameters for the neural network to recognize the handwritten digits.

\section{Conclusions}\label{sec:conclu}
We proposed novel incremental and parallel subgradient algorithms with a line search that determines suitable learning rates automatically, algorithmically, and appropriately for learning support vector machines.
We showed that the algorithms converge to optimal solutions of constrained nonsmooth convex optimization problems appearing in the task of learning support vector machines.
Experiments justified the claimed advantages of the proposed algorithms.
We compared them with a machine learning algorithm Pegasos, which is designed to learn with a support vector machine efficiently, in terms of prediction accuracy, value of the objective function, and computational time.
Regarding the parallel subgradient algorithm in particular, the issue of the computational overhead of the line search can be resolved using multi-core computing.
Furthermore, we confirmed that we can apply our incremental subgradient algorithm with the line search to a neural network and they can train it effectively.
Overall, our algorithms are useful for efficiently learning a support vector machine and for training a neural network including deep learning.

\section*{Acknowledgements}
This work was supported by the Japan Society for the Promotion of Science (JSPS KAKENHI Grant Numbers JP17J09220, JP18K11184).
The authors would like to thank the Topic Editor Yoichi Hayashi for giving us a valuable opportunity to submit our research paper to this Research Topic.
We are sincerely grateful to the Topic Editor Guido Bologna and the two anonymous reviewers for helping us improve the original manuscript.
A pre-print version of this paper \cite{arXiv} has been published on the arXiv e-print archive.

\bibliographystyle{abbrv}
\bibliography{biblio}

\begin{thebibliography}{10}

\bibitem{bc}
H.~H. Bauschke and P.~L. Combettes.
\newblock {\em Convex analysis and monotone operator theory in Hilbert spaces}.
\newblock Springer Science+Business Media, 2011.

\bibitem{beltran2005}
C.~Beltran and F.~J. Heredia.
\newblock An effective line search for the subgradient method.
\newblock {\em Journal of Optimization Theory and Applications}, 125(1):1--18,
  Apr 2005.

\bibitem{itvm}
V.~Berinde.
\newblock {\em Iterative Approximation of Fixed Points}, volume 1912 of {\em
  Lecture Notes in Mathematics}.
\newblock Springer--Verlag, Berlin Heidelberg, 2007.

\bibitem{bertsekas}
D.~P. Bertsekas, A.~Nedi\'c, and O.~A. E.
\newblock {\em Convex Analysis and Optimization}.
\newblock Athena Scientific, Belmont, 2003.

\bibitem{bottou1991}
L.~Bottou.
\newblock Stochastic gradient learning in neural networks.
\newblock In {\em Proceedings of Neuro-N\^imes 91}, Nimes, France, 1991. EC2.

\bibitem{combettes2003}
P.~L. {Combettes}.
\newblock A block-iterative surrogate constraint splitting method for quadratic
  signal recovery.
\newblock {\em IEEE Transactions on Signal Processing}, 51(7):1771--1782, July
  2003.

\bibitem{dratch}
P.~L. Combettes and J.~C. Pesquet.
\newblock A douglas--rachford splitting approach to nonsmooth convex
  variational signal recovery.
\newblock {\em IEEE Journal of Selected Topics in Signal Processing},
  1(4):564--574, Dec 2007.

\bibitem{svm-book}
N.~Cristianini, J.~Shawe-Taylor, et~al.
\newblock {\em An introduction to support vector machines and other
  kernel-based learning methods}.
\newblock Cambridge university press, 2000.

\bibitem{pgmls}
J.~Y.~B. Cruz and W.~D. Oliveira.
\newblock On weak and strong convergence of the projected gradient method for
  convex optimization in real {H}ilbert spaces.
\newblock {\em Numerical Functional Analysis and Optimization}, 37(2):129--144,
  2016.

\bibitem{uci}
D.~Dheeru and E.~Karra~Taniskidou.
\newblock {UCI} machine learning repository, 2017.

\bibitem{linesearch1}
W.~L. Hare and Y.~Lucet.
\newblock Derivative-free optimization via proximal point methods.
\newblock {\em Journal of Optimization Theory and Applications},
  160(1):204--220, 2014.

\bibitem{hayashiy}
Y.~Hayashi and H.~Iiduka.
\newblock Optimality and convergence for convex ensemble learning with sparsity
  and diversity based on fixed point optimization.
\newblock {\em Neurocomputing}, 273(Supplement C):367 -- 372, 2018.

\bibitem{psm}
K.~Hishinuma and H.~Iiduka.
\newblock Parallel subgradient method for nonsmooth convex optimization with a
  simple constraint.
\newblock {\em Linear and Nonlinear Analysis}, 1(1):67--77, 2015.

\bibitem{arXiv}
K.~Hishinuma and H.~Iiduka.
\newblock Incremental and parallel line search subgradient methods for
  constrained nonsmooth convex optimization - numerically accelerated results
  by multi-core computing.
\newblock {\em arXiv:1605.03738 [math.OC]}, 2016.

\bibitem{iiduka2013}
H.~Iiduka.
\newblock Fixed point optimization algorithms for distributed optimization in
  networked systems.
\newblock {\em SIAM Journal on Optimization}, 23(1):1--26, 2013.

\bibitem{iiduka2015}
H.~Iiduka.
\newblock Acceleration method for convex optimization over the fixed point set
  of a nonexpansive mapping.
\newblock {\em Mathematical Programming}, 149(1):131--165, Feb 2015.

\bibitem{psm2}
H.~Iiduka.
\newblock Parallel computing subgradient method for nonsmooth convex
  optimization over the intersection of fixed point sets of nonexpansive
  mappings.
\newblock {\em Fixed Point Theory and Applications}, 2015:72, 2015.

\bibitem{star}
H.~Iiduka.
\newblock Convergence analysis of iterative methods for nonsmooth convex
  optimization over fixed point sets of quasi-nonexpansive mappings.
\newblock {\em Mathematical Programming}, 159(1):509--538, 2016.

\bibitem{iidukacg}
H.~Iiduka.
\newblock Line search fixed point algorithms based on nonlinear conjugate
  gradient directions: application to constrained smooth convex optimization.
\newblock {\em Fixed Point Theory and Applications}, 2016:77, 2016.

\bibitem{lecun1998}
Y.~LeCun, L.~Bottou, Y.~Bengio, and P.~Haffner.
\newblock Gradient-based learning applied to document recognition.
\newblock {\em Proceedings of the IEEE}, 86(11):2278--2324, 1998.

\bibitem{mnist}
Y.~LeCun, C.~Cortes, and C.~J.~C. Burges.
\newblock The mnist database of handwritten digits, 1998.

\bibitem{ml1}
E.~Leopold and J.~Kindermann.
\newblock Text categorization with support vector machines. how to represent
  texts in input space?
\newblock {\em Machine Learning}, 46(1):423--444, Jan 2002.

\bibitem{libsvm}
C.-J. Lin.
\newblock {LIBSVM} data: Classification, regression, and multi-label, 2017.

\bibitem{ml2}
Y.~Lin, Y.~Lee, and G.~Wahba.
\newblock Support vector machines for classification in nonstandard situations.
\newblock {\em Machine Learning}, 46(1):191--202, Jan 2002.

\bibitem{cvrate}
A.~Nedi{\'{c}} and D.~Bertsekas.
\newblock {\em Convergence Rate of Incremental Subgradient Algorithms}, pages
  223--264.
\newblock Springer US, Boston, MA, 2001.

\bibitem{ism}
A.~Nedi\'c and D.~P. Bertsekas.
\newblock Incremental subgradient methods for nondifferentiable optimization.
\newblock {\em SIAM Journal on Optimization}, 12(1):109--138, 2001.

\bibitem{nocedal}
J.~Nocedal and S.~Wright.
\newblock {\em Numerical Optimization}.
\newblock Springer-Verlag, New York, 2 edition, 2006.

\bibitem{scikit-learn}
F.~Pedregosa, G.~Varoquaux, A.~Gramfort, V.~Michel, B.~Thirion, O.~Grisel,
  M.~Blondel, P.~Prettenhofer, R.~Weiss, V.~Dubourg, J.~Vanderplas, A.~Passos,
  D.~Cournapeau, M.~Brucher, M.~Perrot, and E.~Duchesnay.
\newblock Scikit-learn: Machine learning in {P}ython.
\newblock {\em Journal of Machine Learning Research}, 12:2825--2830, 2011.

\bibitem{platt1998}
J.~Platt.
\newblock Sequential minimal optimization: A fast algorithm for training
  support vector machines.
\newblock Technical report, April 1998.

\bibitem{polyak}
B.~T. Polyak.
\newblock Introduction to optimization. translation series in mathematics and
  engineering.
\newblock {\em Optimization Software}, 1987.

\bibitem{ml3}
S.~Pradhan, K.~Hacioglu, V.~Krugler, W.~Ward, J.~H. Martin, and D.~Jurafsky.
\newblock Support vector learning for semantic argument classification.
\newblock {\em Machine Learning}, 60(1):11--39, Sep 2005.

\bibitem{rockafellar}
R.~T. Rockafellar.
\newblock Monotone operators associated with saddle-functions and minimax
  problems.
\newblock {\em Nonlinear functional analysis}, 18(I):397--407, 1970.

\bibitem{pegasos}
S.~Shalev-Shwartz, Y.~Singer, N.~Srebro, and A.~Cotter.
\newblock Pegasos: primal estimated sub-gradient solver for {SVM}.
\newblock {\em Mathematical Programming}, 127(1):3--30, 2011.

\bibitem{yamada2007}
K.~{Slavakis} and I.~{Yamada}.
\newblock Robust wideband beamforming by the hybrid steepest descent method.
\newblock {\em IEEE Transactions on Signal Processing}, 55(9):4511--4522, Sep.
  2007.

\bibitem{noco}
W.~Takahashi.
\newblock {\em Introduction to Nonlinear and Convex Analysis}.
\newblock Yokohama Publishers, Inc., Yokohama, 2009.

\bibitem{realan}
W.~F. Trench.
\newblock {\em Introduction to Real Analysis}.
\newblock Pearson Education, 2003.

\bibitem{tutsoy2016-2}
O.~Tutsoy and M.~Brown.
\newblock An analysis of value function learning with piecewise linear control.
\newblock {\em Journal of Experimental \& Theoretical Artificial Intelligence},
  28(3):529--545, 2016.

\bibitem{tutsoy2016}
O.~Tutsoy and M.~Brown.
\newblock Reinforcement learning analysis for a minimum time balance problem.
\newblock {\em Transactions of the Institute of Measurement and Control},
  38(10):1186--1200, 2016.

\bibitem{wolfe}
P.~Wolfe.
\newblock Convergence conditions for ascent methods.
\newblock {\em SIAM review}, 11(2):226--235, 1969.

\bibitem{yamada2005}
I.~Yamada and N.~Ogura.
\newblock Hybrid steepest descent method for variational inequality problem
  over the fixed point set of certain quasi-nonexpansive mappings.
\newblock {\em Numerical Functional Analysis and Optimization},
  25(7-8):619--655, 2005.

\bibitem{linesearch2}
G.~Yuan, Z.~Meng, and Y.~Li.
\newblock A modified {H}estenes and {S}tiefel conjugate gradient algorithm for
  large-scale nonsmooth minimizations and nonlinear equations.
\newblock {\em Journal of Optimization Theory and Applications},
  168(1):129--152, 2016.

\end{thebibliography}
\end{document}